\numberwithin{equation}{section}
\newtheorem{theorem}{Theorem}[section]
\newtheorem{pro}[theorem]{Proposition}
\newtheorem{cor}[theorem]{Corollary}
\theoremstyle{definition}
\newtheorem{exa}{Example}[section]
\theoremstyle{notation}
\def\ad{\mathop{\rm ad}}
\def\id{\mathop{\rm id}}
\def\B{\mathop{\rm B}}
\def\L{\mathop{\mathcal L}}
\def\Z{\mathop{\mathcal Z}}
\def\H{\mathop{\mathcal H}}
\def\B{\mathop{\mathcal B}}
\def\R{\mathop{\mathcal R}}
\def\rad{\mathop{\rm rad}}
\def\al{\mathop{\langle}}
\def\ar{\mathop{\rangle}}
\begin{document}
\title[On Amalgamated Banach algebras]
 {On Amalgamated Banach algebras}



   \author[H. Pourmahmood]{H. Pourmahmood Aghababa}
   \author[N. Shirmohammadi]{N. Shirmohammadi}

   \address{Department of Mathematics, University of Tabriz, Tabriz, Iran.}
   \email{h\_p\_aghababa@tabrizu.ac.ir \\ h\_pourmahmood@yahoo.com}
   \email{shirmohammadi@tabrizu.ac.ir}

\footnote{The first author is supported by University of Tabriz. }

\keywords{Banach modules, Topological centre, Weak amenability.}

\subjclass[2000]{46H25, 16E40, 13B02.}

\maketitle



\begin{abstract}
Let $A$ and $B$ be Banach algebras, $\theta: A\to B$ be a continuous Banach algebra homomorphism and $I$ be a closed ideal in $B$. Then the direct sum of $A$ and $I$ with respect to $\theta$, denoted $A\bowtie^{\, \theta}I$, with a special product becomes a Banach algebra which is called the amalgamated Banach algebra. In this paper, among other things, we compute the topological centre of $A\bowtie^{\, \theta}I$ in terms of that of $A$ and $I$. Using this, we provide a characterization of the Arens regularity of $A\bowtie^{\, \theta}I$. Then we determine the character space of $A\bowtie^{\, \theta}I$ in terms of that of $A$ and $I$. Moreover, we study the weak amenability of $A\bowtie^{\, \theta}I$.
\end{abstract}


\section{Introduction}


Let $A$ and $B$ be Banach algebras, $\theta: A\to B$ be a continuous Banach algebra homomorphism, which without loss of generality we can assume that $\|\theta\|\leq 1$, and let $I$ be a closed ideal in $B$. We consider the Banach algebra $A\bowtie^{\, \theta} I=\{(a,i): a\in A, i\in I\}$, the $l^1$-direct sum of $A$ and $I$, with the following product formula:
$$(a,i)\cdot (a',i')=(aa', \theta(a)i'+i\theta(a')+ii').$$
$A\bowtie^{\, \theta} I$ is called the {\it amalgamation of $A$ with $B$ along $I$ with respect to $\theta$}.

The algebraic version of amalgamated Banach algebras are studied by many algebraists, see for example \cite{DFF1, DFF2, DFF3, DF, SShS}.

A special case of amalgamated Banach algebras, with $I=B$, is studied by some authors, see \cite{AGhR, BD, JN}, for example. To our knowledge there are no concrete Banach algebra with this structure. While, as Example \ref{exa1} shows, many classes of concrete Banach algebras can be represented as amalgamated Banach algebras.

The organization of paper is as follows. In Section 2 of this paper, after presenting some examples of amalgamated Banach algebras we establish some primary properties of these algebras. In Sections 3 and 4, we characterize the second dual and topological centres of $A\bowtie^{\, \theta}I$ as well as its Arens regularity. In Section 5 we characterize the character space of $A\bowtie^{\, \theta}I$. Finally, Section 6 is devoted to the investigation of the weak amenability of $A\bowtie^{\, \theta}I$.


\section{Some Examples and Primary Properties}


We commence this section with listing a number of concrete Banach algebras that have the amalgamated structure.
\begin{exa}\label{exa1}
\begin{enumerate}
\item[(i)] If $\theta=0$, then $A\bowtie^{\, 0}I$ is nothing but the cartesian product of $A$ and $I$.
\item[(ii)] Let $A$ be a non-unital Banach algebra. Then the {\it unitization} of $A$, i.e. $A^{\#}={\mathbb C}\oplus A$, is the amalgamation of $\mathbb C$ with $A^{\#}$ along $A$ with respect to the homomorphism $\theta: {\mathbb C}\to A^{\#}$ defined by $\theta(\lambda)=(\lambda, 0)$.
\item[(iii)] Let $A$ be a Banach algebra and $X$ be a Banach $A$-bimodule. Then the {\it module extension Banach algebra} ${\mathcal S}=A\oplus X$ is the amalgamation of $A$ with $\mathcal S$ along $X$ with respect to the injection $\theta: A\to {\mathcal S}$ defined by $\theta(a)=(a, 0)$. Notice that the class of module extension Banach algebras include the class of triangular Banach algebras.
\item[(iv)] Let $A$ be a Banach algebra and $\phi$ be a nonzero character on $A$. Then $A\bowtie^{\, \phi} {\mathbb C}$ is the Banach algebra with the underlying Banach space $A\oplus {\mathbb C}$ and with the product
$$(a, \lambda) \cdot (a', \lambda')=(aa', \phi(a)\lambda'+\phi(a')\lambda+\lambda\lambda').$$
\item[(v)] Let $A$ and $B$ be Banach algebras and let $\phi$ be a nonzero character on $A$. Then $A\bowtie^{\, \theta} B$, the amalgamation of $A$ with $B^{\#}$ along $B$ with respect to the homomorphism $\theta: A\to B^{\#}$ defined by $\theta(a)=(\phi(a), 0)$, is the Banach algebra with the underlying Banach space $A\oplus_1 B$, the $l^1$-direct sum of $A$ and $B$, and with the following product formula:
$$(a, b) \cdot (a', b')=(aa', \phi(a)b'+\phi(a')b+bb').$$
This is a known Banach algebra denoted by $A\oplus_{\phi}B$, called the {\it $\phi$-Lau product of $A$ and $B$}, see \cite{San} for example. This class includes the class of Lau algebras introduced in \cite{Lau}.
\item[(vi)] One of the other interesting examples is the {\it semidirect product} of Banach algebras. Indeed, let $B$ be a Banach algebra, $A$ be a closed subalgebra of $B$ and $I$ be a closed ideal in $B$. If $\iota: A\to B$ is  the inclusion map, the amalgamated Banach algebra $C=A \bowtie^{\, \iota} I$ is $A \ltimes I$, the semidirect product of $A$ and $I$ \cite[Page 8]{DL} (as far as we know, the term ``semidirect product'' in the theory of (commutative) Banach algebras is introduced and studied by Thomas in \cite{Tho}). We give an important class of Banach algebras which can be recognized as a semidirect product. Let $A$ be a dual Banach algebra with predual $A_{*}$ and consider $A^{**}$, the second dual of $A$ equipped with either first or second Arens product (see Section 3 for definitions). It is shown in \cite[Theorem 2.15]{DL} that $A^{**}= A \ltimes A_*^{\perp}$, where $A_*^{\perp}=\{F\in A^{**}: F=0 \ {\rm on} \ A_* \}$. We remark that every von Neumann algebra, the measure algebra $M(G)$ of a locally compact group $G$, and the second dual of an Arens regular Banach algebra are examples of dual Banach algebras. Also the measure algebra of a locally compact group $G$ has a natural semidirect product structure. In fact we have $M(G)=l^1(G) \ltimes M_c(G)$, where $l^1(G)$ and $M_c(G)$ denote the space of discrete measures and continuous measures in $M(G)$, respectively.
\end{enumerate}
\end{exa}
In the following proposition, we have collected some basic properties of the Banach algebra $A\bowtie^{\, \theta}I$.
\begin{pro}\label{p7}
Let $A\bowtie^{\, \theta}I$ be the amalgamation of $A$ with $B$ along $I$ with respect to $\theta$.
\begin{itemize}
\item[(i)] $A\cong A\times \{0\}$ is a closed subalgebra of $A\bowtie^{\, \theta}I$, $I \cong \{0\} \times I$ is a closed ideal in $A\bowtie^{\, \theta}I$ and $\frac{A\bowtie^{\, \theta} I}{I}\cong A$.
\item[(ii)] $A\bowtie^{\, \theta}I$ is commutative if and only if $A$ and $\theta(A)+I$ are commutative.
\item[(iii)] $(a,i)$ is an identity for $A\bowtie^{\, \theta}I$ if and only if $a=1_A$, $i^2=i$, $i\in {\rm Ann}_{I}(\theta(A))$ and $\theta(a)+i=1_{\theta(A)+I}$, where ${\rm Ann}_{I}(\theta(A))=\{j\in I: j\theta(a)=\theta(a)j=0 \ {\mbox{for all}} \ a\in A\}$.
\item[(iv)] $((a_{\alpha}, i_{\alpha}))_{\alpha}$ is a (bounded) left (right, or two-sided) approximate identity for $A\bowtie^{\, \theta}I$ if and only if $(a_{\alpha})_{\alpha}$ is a (bounded) left (right, or two-sided) approximate identity for $A$, $(\theta(a_{\alpha})+i_{\alpha})_{\alpha}$ is a (bounded) left (right, or two-sided) approximate identity for $\theta(A)+I$ and $i_{\alpha}\theta(a)\to 0$ for all $a\in A$.
\item[(v)] If $A\bowtie^{\, \theta}I$ is commutative, then $A\bowtie^{\, \theta} I$ is regular if and only if both $A$ and $I$ are regular (see \cite[Definition 4.2.1]{Kan}).
\item[(vi)] $A\bowtie^{\, \theta}I$ is amenable if and only if $A$ and $I$ are amenable (see \cite[Definition 2.1.9]{Run}).
\end{itemize}
\end{pro}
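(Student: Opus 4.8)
\noindent\emph{Strategy.} Parts (i)--(iv) reduce to direct manipulation of the product formula, so I would dispatch these first and reserve the real work for (v) and (vi). For (i), the assignment $a\mapsto(a,0)$ is an isometric homomorphism with closed range because $(a,0)(a',0)=(aa',0)$, while $\{0\}\times I$ is a closed ideal since $I$ being an ideal of $B$ forces both $\theta(a)i'+ii'$ and $i\theta(a')+ii'$ to lie in $I$; the coordinate projection $q:(a,i)\mapsto a$ is then an isometric algebra epimorphism with kernel $\{0\}\times I$, which identifies $(A\bowtie^{\,\theta}I)/I$ with $A$. For (ii) and (iii) I would expand $(a,i)(a',i')$ and $(a',i')(a,i)$, use that $\theta$ is multiplicative to cancel the $\theta(a)\theta(a')$ terms, and read off the conditions from the second coordinate; the element $\theta(a)+i\in B$ emerges on its own, which is exactly why the hypotheses are most naturally phrased through $\theta(A)+I$ and ${\rm Ann}_{I}(\theta(A))$.

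For (iv) I would take the net versions of the equations in (iii) and pass to the limit. Inspecting the second coordinate of the (say, left) approximate-identity relation with $i'=0$ isolates the cross term and yields $i_{\alpha}\theta(a)\to 0$, while the same relation with $a'=0$ gives $(\theta(a_{\alpha})+i_{\alpha})i'\to i'$; feeding $i_{\alpha}\theta(a)\to 0$ back in upgrades this to an approximate identity for $\theta(A)+I$, and the converse reassembles these pieces. The bounded, right, and two-sided variants are handled identically. For (v), commutativity is assumed, so I would invoke the permanence of regularity for commutative Banach algebras: regularity passes to closed ideals and to quotients, and an extension of a regular algebra by a regular closed ideal is again regular (\cite{Kan}). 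Applying this to the closed ideal $I$ and to the quotient $(A\bowtie^{\,\theta}I)/I\cong A$ from (i) gives the stated equivalence directly.

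Part (vi) is the substantive one, and the plan is to derive it from the hereditary properties of amenability together with the splitting furnished by (i). The reverse implication is immediate: if $A$ and $I$ are amenable, then since $I$ is a closed ideal with amenable quotient $(A\bowtie^{\,\theta}I)/I\cong A$, the extension theorem for amenable Banach algebras (\cite{Run}) shows $A\bowtie^{\,\theta}I$ is amenable. For the forward implication, assume $A\bowtie^{\,\theta}I$ is amenable; its quotient $A$ is then amenable, and it remains to treat $I$. Here the key point is that $A\bowtie^{\,\theta}I=A\oplus_1 I$ splits as a Banach space, so the closed ideal $\{0\}\times I$ is complemented, hence weakly complemented, and a weakly complemented closed ideal of an amenable Banach algebra carries a bounded approximate identity and is therefore amenable (\cite{Run}). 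I expect this to be the main obstacle, precisely because amenability is \emph{not} inherited by arbitrary closed ideals: it is the complementation built into the amalgamated structure, not any soft quotient argument, that supplies the bounded approximate identity for $I$. I would therefore pin down the exact form of the ideal theorem I cite; should only the ``bounded approximate identity implies amenable'' half be available in \cite{Run}, I would supplement it with the standard fact that, in an amenable Banach algebra, a weakly complemented closed ideal admits a bounded approximate identity (a consequence of the existence of a bounded approximate diagonal).
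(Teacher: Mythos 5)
Your proposal is correct and follows essentially the same route as the paper: parts (i)--(iv) by direct computation with the product formula, part (v) via the hereditary/extension properties of regularity for the closed ideal $I$ and the quotient $(A\bowtie^{\,\theta}I)/I\cong A$ (Kaniuth, Theorems 4.2.6 and 4.3.8), and part (vi) via the amenability of quotients, the extension theorem, and precisely the point you flag as the crux --- that $\{0\}\times I$ is complemented (hence weakly complemented) in the $l^1$-sum, so Runde's ideal theorem (his Theorem 2.3.7, which indeed contains the equivalence amenable $\Leftrightarrow$ b.a.i.\ $\Leftrightarrow$ weakly complemented for closed ideals of amenable algebras) yields amenability of $I$. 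The only nitpick is in (i): the projection $(a,i)\mapsto a$ is a metric surjection, not an isometry; it is the induced map on the quotient $(A\bowtie^{\,\theta}I)/I$ that is isometric.
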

\begin{proof}
All of the parts (i)-(iv) can be easily checked. The part (v) follows from Theorems 4.2.6 and 4.3.8 of \cite{Kan}, since $\frac{A\bowtie^{\, \theta} I}{I}\cong A$. Finally, (vi) follows from Corollary 2.3.2, Theorem 2.3.7 and Theorem 2.3.10  of \cite{Run}.
\end{proof}
\begin{cor}\label{c10}
Let $A\bowtie^{\, \id}A$ be the amalgamation of $A$ with $A$ along $A$ with respect to the identity map $\id$ on $A$.
\begin{itemize}
\item[(i)] $A\bowtie^{\, \id}A$ is commutative if and only if $A$ is commutative.
\item[(ii)] $(a,b)$ is an identity for $A\bowtie^{\, \id}A$ if and only if $a=1_A$ and $b=0$.
\item[(iii)] $((a_{\alpha}, b_{\alpha}))_{\alpha}$ is a (bounded) left (right, or two-sided) approximate identity for $A\bowtie^{\, \id}A$ if and only if $(a_{\alpha})_{\alpha}$ is a (bounded) left (right, or two-sided) approximate identity for $A$ and $b_{\alpha}\to 0$.
\item[(v)] If $A$ is commutative, then $A\bowtie^{\, \id}A$ is regular if and only if $A$ is regular.
\item[(vi)] $A\bowtie^{\, \id}A$ is amenable if and only if $A$ is amenable.
\end{itemize}
\end{cor}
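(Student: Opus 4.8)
The plan is to obtain every item as the specialization of the matching part of Proposition~\ref{p7} to the data $B=I=A$ and $\theta=\id$. Under this choice one has $\theta(A)+I=A$ and ${\rm Ann}_{I}(\theta(A))={\rm Ann}_{A}(A)$, and the homomorphism acts as the identity, so most assertions reduce to direct substitutions. For instance, (i) is Proposition~\ref{p7}(ii) once the phrase ``$A$ and $\theta(A)+I$ are commutative'' is read as ``$A$ is commutative''; (vi) is Proposition~\ref{p7}(vi) with $I=A$; and (v) is Proposition~\ref{p7}(v), after noting that its hypothesis---commutativity of $A\bowtie^{\,\id}A$---is supplied by part (i) under the standing assumption that $A$ is commutative, so that ``both $A$ and $I$ are regular'' collapses to ``$A$ is regular''.

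For (ii) I would feed $(a,b)$ into Proposition~\ref{p7}(iii): it is an identity precisely when $a=1_{A}$, $b^{2}=b$, $b\in{\rm Ann}_{A}(A)$ and $\theta(a)+b=1_{A}+b=1_{A}$. The last equation forces $b=0$ (and, in particular, forces $A$ to be unital), while conversely $(1_{A},0)$ plainly meets all four requirements; this yields the stated characterization $a=1_{A}$, $b=0$.

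The delicate item is (iii). Specializing Proposition~\ref{p7}(iv) shows that $((a_{\alpha},b_{\alpha}))_{\alpha}$ is a (bounded) left approximate identity if and only if $(a_{\alpha})_{\alpha}$ and $(a_{\alpha}+b_{\alpha})_{\alpha}$ are (bounded) left approximate identities for $A$ and $b_{\alpha}a\to 0$ for every $a\in A$ (the right and two-sided cases being symmetric). The forward implication of the corollary is then immediate: if $(a_{\alpha})_{\alpha}$ is a (bounded) approximate identity and $b_{\alpha}\to 0$, then $(a_{\alpha}+b_{\alpha})_{\alpha}$ is again one and $b_{\alpha}a\to 0$. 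I expect the reverse implication to be the main obstacle, since subtracting the relations $a_{\alpha}x\to x$ and $(a_{\alpha}+b_{\alpha})x\to x$ yields only the pointwise statement $b_{\alpha}x\to 0$ for each $x$, which need not upgrade to norm convergence. I would therefore interpret ``$b_{\alpha}\to 0$'' in the sense inherited from Proposition~\ref{p7}(iv), namely $b_{\alpha}a\to 0$ for all $a\in A$, and the crux of the argument is to confirm that this is exactly the content of (iii) and to pin down the intended mode of convergence.
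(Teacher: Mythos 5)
Your route is the same as the paper's: the paper gives no separate proof of this corollary, treating it as the immediate specialization of Proposition \ref{p7} to $B=I=A$ and $\theta=\id$, which is exactly what you carry out. Your handling of (i), (ii), (v) and (vi) is correct; in (ii) you could even skip the last equation, since $b\in{\rm Ann}_A(A)$ together with $b^2=b$ already forces $b=b^2=0$.

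Your hesitation about (iii) is not a gap in your argument but a genuine defect in the statement as printed. The specialization of Proposition \ref{p7}(iv) yields precisely what you derived: $(a_\alpha)_\alpha$ is a left approximate identity for $A$ and $b_\alpha a\to 0$ for every $a\in A$ (the condition that $(a_\alpha+b_\alpha)_\alpha$ be an approximate identity follows from these two), and this pointwise condition cannot be upgraded to $\norm{b_\alpha}\to 0$. Concretely, take $A=c_0$ with pointwise product, $a_n=\sum_{k\le n}e_k$ and $b_n=e_{n+1}$; then $(a_n,b_n)\cdot(a,b)=(a_na,\ a_nb+b_na+b_nb)\to(a,b)$ for every $(a,b)$, and symmetrically on the right, so $((a_n,b_n))_n$ is a bounded two-sided approximate identity for $A\bowtie^{\,\id}A$ while $\norm{b_n}=1$ for all $n$. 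Hence the ``only if'' direction of (iii), read with norm convergence, is false for non-unital $A$; the correct formulation is the one you propose, namely $b_\alpha a\to 0$ (and $ab_\alpha\to 0$ in the right and two-sided cases) for all $a\in A$, which recovers norm convergence when $A$ is unital (test against $a=1_A$) but not in general. So your proof is the intended one, and your reinterpretation of ``$b_\alpha\to 0$'' is the necessary repair of the corollary rather than an optional reading.
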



\section{The First and Second Arens Products on $(A\bowtie^{\, \theta}I)^{**}$}


Let $A$ be a Banach algebra and $A^*$ and $A^{**}$ be the first and second duals of $A$, respectively. Let $a\in A$, $f\in A^*$. Then $a\cdot f$ and $f\cdot a \in A^*$ are defined by
$$\langle a\cdot f, b\rangle=\langle f, ba\rangle, \quad \langle f\cdot a, b\rangle=\langle f, ab\rangle \qquad (b\in A),$$
making $A^*$ an $A$-bimodule. Similarly, $A^{**}$ is an $A$-bimodule.

There are two natural products on $A^{**}$, called the {\it first and second Arens products}, and are denoted by $\Box$ and $\Diamond$, respectively. They were introduced by Arens \cite{Are} (for more details the reader is refereed to \cite{Dal}). We recall briefly the definitions. For $f\in A^*$ and $F\in A^{**}$ define $f\cdot F\in A^*$ and $F\cdot f\in A^*$ by
$$\langle f\cdot F, a\rangle=\langle F, a\cdot f\rangle, \quad \langle F\cdot f, a\rangle=\langle F, f\cdot a\rangle \qquad (a\in A).$$
Now, for $F, G\in A^{**}$, define $F\Box G \in A^{**}$ and $F\Diamond G\in A^{**}$ by
 $$\langle F\Box G, f\rangle=\langle F, G\cdot f\rangle, \quad \langle F\Diamond G, f\rangle=\langle G, f\cdot F\rangle \qquad (f\in A^*).$$
Then $(A^{**}, \Box)$ and $(A^{**}, \Diamond)$ are Banach algebras containing $A$ as a closed subalgebra.
\begin{pro}\label{p1}
$(A\bowtie^{\, \theta}I)^*$ is isometrically isomorphic to $A^* \oplus_{\infty}I^*$ as Banach spaces. The isomorphism $\Psi: A^* \oplus_{\infty}I^* \rightarrow (A\bowtie^{\, \theta}I)^*$ is given by
$$\al (a,i), \Psi(f,g) \ar= f(a)+g(i) \qquad ((a,i)\in A\bowtie^{\, \theta}I, \, (f,g)\in A^* \oplus_{\infty}I^*).$$
\end{pro}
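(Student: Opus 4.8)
The plan is to observe that this is a statement purely about the underlying Banach space structure of $A\bowtie^{\, \theta}I$, so that the product formula plays no role whatsoever. By definition $A\bowtie^{\, \theta}I$ is the $l^1$-direct sum $A\oplus_1 I$, carrying the norm $\|(a,i)\|=\|a\|+\|i\|$. The assertion is then an instance of the standard duality $(X\oplus_1 Y)^*\cong X^*\oplus_\infty Y^*$, and I would establish it directly for the explicit map $\Psi$ by verifying in turn that $\Psi$ is well defined, linear, bijective, and isometric.

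First I would check that $\Psi(f,g)$ is a bounded functional for each $(f,g)\in A^*\oplus_{\infty}I^*$. Linearity in $(a,i)$ is immediate from the pairing formula, while the estimate
$$|f(a)+g(i)|\le \|f\|\,\|a\|+\|g\|\,\|i\|\le \max(\|f\|,\|g\|)\,(\|a\|+\|i\|)$$
shows $\|\Psi(f,g)\|\le \|(f,g)\|_{\infty}$. Linearity of $\Psi$ itself is clear from the formula.

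Next I would verify bijectivity. For injectivity, if $\Psi(f,g)=0$ then evaluating the pairing at $(a,0)$ forces $f=0$ and at $(0,i)$ forces $g=0$. For surjectivity, given $\Lambda\in(A\bowtie^{\, \theta}I)^*$ I would set $f:=\Lambda(\cdot\,,0)\in A^*$ and $g:=\Lambda(0,\cdot\,)\in I^*$; since every element splits as $(a,i)=(a,0)+(0,i)$, linearity of $\Lambda$ gives $\Lambda=\Psi(f,g)$.

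Finally, for the isometry I would complement the inequality above with the reverse estimate: testing $\Psi(f,g)$ against unit vectors of the form $(a,0)$ recovers $\sup_{\|a\|\le 1}|f(a)|=\|f\|$, and testing against $(0,i)$ recovers $\|g\|$, whence $\|\Psi(f,g)\|\ge\max(\|f\|,\|g\|)=\|(f,g)\|_{\infty}$. There is no genuine obstacle here; the only points requiring a moment's care are keeping the norms on the two direct sums straight (the $l^1$-norm on $A\bowtie^{\, \theta}I$ dualizes to the $l^{\infty}$-norm on $A^*\oplus_{\infty}I^*$) and confirming explicitly that the product structure is irrelevant to this dual-space identification.
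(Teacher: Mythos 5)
Your proof is correct and complete: the paper itself omits the argument (its proof reads only ``straightforward and omitted''), and your verification -- that the product is irrelevant, that $\Psi$ is linear, bijective, and that the $l^1$-norm on $A\bowtie^{\,\theta}I$ dualizes isometrically to the $l^\infty$-norm on $A^*\oplus_\infty I^*$ -- is exactly the standard argument the authors had in mind. No gaps.
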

\begin{proof}
The proof is straightforward and is omitted.
\end{proof}
\begin{cor}\label{c1}
$(A\bowtie^{\, \theta}I)^{**}$ is isometrically isomorphic to $A^{**} \oplus_1 I^{**}$ as Banach spaces.
\end{cor}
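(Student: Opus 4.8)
The plan is to deduce this immediately from Proposition \ref{p1} by dualizing and then invoking the standard duality between $\ell^1$- and $\ell^\infty$-direct sums of Banach spaces. First I would record the elementary companion to the fact underlying Proposition \ref{p1}: for any Banach spaces $X$ and $Y$ there is an isometric isomorphism $(X\oplus_\infty Y)^*\cong X^*\oplus_1 Y^*$, implemented by the pairing $\langle (x,y),(f,g)\rangle = f(x)+g(y)$, where the crucial point is that $\sup_{\|x\|\le 1,\,\|y\|\le 1}|f(x)+g(y)| = \|f\|+\|g\|$. This is the exact dual counterpart of the identity $(X\oplus_1 Y)^*\cong X^*\oplus_\infty Y^*$ that gives Proposition \ref{p1} in the first place, since $A\bowtie^{\,\theta}I$ carries the $\ell^1$-direct sum norm of $A$ and $I$.

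Next I would take the Banach-space adjoint of the isometric isomorphism $\Psi\colon A^*\oplus_\infty I^*\to (A\bowtie^{\,\theta}I)^*$ furnished by Proposition \ref{p1}. Because the adjoint of a surjective linear isometry is again a surjective linear isometry, the map $\Psi^*\colon (A\bowtie^{\,\theta}I)^{**}\to (A^*\oplus_\infty I^*)^*$ is an isometric isomorphism. Composing $\Psi^*$ with the isometric isomorphism $(A^*\oplus_\infty I^*)^*\cong A^{**}\oplus_1 I^{**}$ from the first step then produces the desired isometric isomorphism $(A\bowtie^{\,\theta}I)^{**}\cong A^{**}\oplus_1 I^{**}$.

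There is no real obstacle in this argument: all of the algebra-specific content is already carried by Proposition \ref{p1}, and what remains is a two-step application of standard Banach-space duality. If one prefers an explicit description rather than an abstract composition, the isomorphism is $\Theta\colon (A\bowtie^{\,\theta}I)^{**}\to A^{**}\oplus_1 I^{**}$, $\Theta(F)=(F_1,F_2)$, determined by $\langle f,F_1\rangle=\langle \Psi(f,0),F\rangle$ and $\langle g,F_2\rangle=\langle \Psi(0,g),F\rangle$ for $f\in A^*$ and $g\in I^*$. The only thing left to verify is the norm identity $\|F\|=\|F_1\|+\|F_2\|$, and this reduces at once to the formula for the norm on $A^*\oplus_\infty I^*$ established above. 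I would note, however, that this is purely a statement about the underlying Banach spaces; the interaction of this identification with the two Arens products is a genuinely separate matter, to be taken up in the subsequent sections.
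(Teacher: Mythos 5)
Your proof is correct and follows the same route the paper intends: the corollary is stated as an immediate consequence of Proposition \ref{p1}, obtained by dualizing the isometry $\Psi$ and invoking the standard identification $(X\oplus_\infty Y)^*\cong X^*\oplus_1 Y^*$ for a pair of Banach spaces. Your extra care with the norm identity and the explicit form of the isomorphism is sound but adds nothing beyond the paper's (implicit) argument.
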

Now we explore the left and right module actions of $A\bowtie^{\, \theta}I$ on $(A\bowtie^{\, \theta}I)^*$ in order to provide a characterization of the first and second Arens product on $(A\bowtie^{\, \theta}I)^{**}$.
\begin{theorem}\label{t1}
Let $A\bowtie^{\, \theta}I$ be the amalgamation of $A$ with $B$ along $I$ with respect to $\theta$. Then
$$((A\bowtie^{\, \theta}I)^{**}, \Box)=(A^{**}, \Box) \bowtie^{\, \theta^{**}} (I^{**}, \Box),$$
where $\theta^{**}$ is the second adjoint of $\theta$.
\end{theorem}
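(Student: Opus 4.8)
The plan is to transport the first Arens product from $(A\bowtie^{\,\theta}I)^{**}$ to $A^{**}\oplus_1 I^{**}$ through the Banach space identification of Corollary \ref{c1}, and to compute it by means of the iterated weak$^*$-limit description of $\Box$. Recall that if $F,G$ lie in the bidual of a Banach algebra and $a_\alpha\to F$, $b_\beta\to G$ weak$^*$ with $a_\alpha,b_\beta$ in the algebra (bounded, by Goldstine's theorem), then $F\Box G=\text{w}^*\text{-}\lim_\alpha\text{w}^*\text{-}\lim_\beta a_\alpha b_\beta$, the outer limit running over the left factor. This rests on two standard features of $\Box$: it is weak$^*$-continuous in its left variable for every fixed right variable, and it is weak$^*$-continuous in its right variable whenever the left variable belongs to the algebra itself.

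Before computing I would record the facts that make the right-hand side a legitimate amalgamated algebra. First, $\theta^{**}\colon(A^{**},\Box)\to(B^{**},\Box)$ is a homomorphism, since the second adjoint of a Banach algebra homomorphism respects the first Arens products; moreover $\theta^{**}=(\theta^*)^*$ is weak$^*$-continuous. Second, $I^{**}$, identified with $I^{\perp\perp}$ inside $B^{**}$, is a closed two-sided ideal of $(B^{**},\Box)$: for $i_\alpha\to U$ in $I$ and $b_\beta\to G$ in $B$ one has $i_\alpha b_\beta\in I$ and $b_\beta i_\alpha\in I$, and passing to the iterated weak$^*$-limit keeps one inside the weak$^*$-closure $I^{\perp\perp}=I^{**}$; the Arens product of $I^{**}$ built from $I$ then agrees with the restriction of $\Box$ from $B^{**}$. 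With these in hand $(A^{**},\Box)\bowtie^{\,\theta^{**}}(I^{**},\Box)$ is defined, and by Corollary \ref{c1} it shares the underlying Banach space $A^{**}\oplus_1 I^{**}$ with $(A\bowtie^{\,\theta}I)^{**}$.

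For the core step I would fix $(F,U),(F',U')\in A^{**}\oplus_1 I^{**}$ and choose nets $(a_\alpha,i_\alpha)$ and $(a'_\beta,i'_\beta)$ in $A\bowtie^{\,\theta}I$ converging weak$^*$ to $(F,U)$ and $(F',U')$; since the coordinate projections are weak$^*$-continuous (they are adjoints of the coordinate inclusions of Proposition \ref{p1}), this forces $a_\alpha\to F$, $i_\alpha\to U$, $a'_\beta\to F'$, $i'_\beta\to U'$ in the respective biduals. Multiplying,
$$(a_\alpha,i_\alpha)(a'_\beta,i'_\beta)=\bigl(a_\alpha a'_\beta,\ \theta(a_\alpha)i'_\beta+i_\alpha\theta(a'_\beta)+i_\alpha i'_\beta\bigr),$$
and I would take the iterated weak$^*$-limit coordinatewise. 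The first coordinate yields $F\Box F'$. In the second coordinate the three summands are handled separately: using that $\theta(a_\alpha),i_\alpha$ lie in $B$ for the inner limits and weak$^*$-continuity in the left variable for the outer ones, together with $\theta(a_\alpha)\to\theta^{**}(F)$ and $\theta(a'_\beta)\to\theta^{**}(F')$ weak$^*$, the terms converge to $\theta^{**}(F)\Box U'$, $U\Box\theta^{**}(F')$ and $U\Box U'$ respectively, each landing in $I^{**}$ by the ideal property above. Linearity of weak$^*$-limits then gives
$$(F,U)\Box(F',U')=\bigl(F\Box F',\ \theta^{**}(F)\Box U'+U\Box\theta^{**}(F')+U\Box U'\bigr),$$
which is exactly the product of $(A^{**},\Box)\bowtie^{\,\theta^{**}}(I^{**},\Box)$; combined with the isometric identification of Corollary \ref{c1} this upgrades to the asserted Banach algebra isomorphism.

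The main obstacle is bookkeeping the order of the two weak$^*$-limits so that it is genuinely the \emph{first} Arens product that appears throughout, and matching this order with the two one-sided continuity properties of $\Box$; in particular one must check that the inner limits produce the module actions $\theta(a_\alpha)\cdot U'$, $i_\alpha\cdot\theta^{**}(F')$ and $i_\alpha\cdot U'$ before the outer limit is applied, and that these already sit in $I^{**}$. An alternative, limit-free route is to compute the scalar $\langle(F,U)\Box(F',U'),(f,g)\rangle$ directly from the definitions by first unwinding the two one-sided module actions of $A\bowtie^{\,\theta}I$ on $(A\bowtie^{\,\theta}I)^*\cong A^*\oplus_\infty I^*$ and recognising the adjoint $\theta^*$ inside the cross terms; this avoids nets but obscures why $\theta^{**}$ emerges, so I would prefer the weak$^*$-limit argument.
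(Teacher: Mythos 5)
Your proof is correct, but it takes a genuinely different route from the paper's. The paper follows exactly the ``limit-free'' alternative you sketch in your final paragraph: it first computes the dual module actions $(f,g)\cdot(a,i)$ and $(F_1,F_2)\cdot(f,g)$ on $(A\bowtie^{\,\theta}I)^{*}\cong A^{*}\oplus_{\infty}I^{*}$, recognising $\theta^{*}$ (and hence $\theta^{**}$) in the cross terms --- these are its equations \eqref{eq1} and \eqref{eq2} --- and then unwinds $\langle(f,g),(F_1,F_2)\Box(G_1,G_2)\rangle$ purely from the definition of $\Box$; no nets or weak$^{*}$-limits appear. Your iterated-limit argument is sound as written: the two one-sided weak$^{*}$-continuity properties of $\Box$ that you invoke are the standard ones, the coordinate projections of $(A\bowtie^{\,\theta}I)^{**}$ are indeed weak$^{*}$-continuous (adjoints of $f\mapsto(f,0)$ and $g\mapsto(0,g)$), and your preliminary checks --- that $\theta^{**}$ is a weak$^{*}$-continuous $\Box$-homomorphism, and that $I^{**}\cong I^{\perp\perp}$ is a $\Box$-ideal of $B^{**}$ on which the intrinsic Arens product agrees with the restricted one --- are precisely what is needed for the right-hand side to be a well-defined amalgamated Banach algebra, a point the paper leaves implicit. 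As for what each approach buys: yours explains conceptually why $\theta^{**}$ must appear (it is the weak$^{*}$-continuous extension of $\theta$) and rests only on standard facts about $\Box$, at the cost of careful bookkeeping of the order of limits; the paper's computation, besides avoiding nets entirely, produces the explicit action formulas \eqref{eq1}, \eqref{eq2} and \eqref{eq4} as by-products, which the paper reuses later, for instance in the proof of Theorem \ref{t2} and in Proposition \ref{p5}.
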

\begin{proof}
Let $a,b\in A$, $i,j\in I$, $f\in A^*$ and $g\in I^*$. Then $(f,g)\cdot (a, i)\in (A\bowtie^{\, \theta}I)^*$ can be calculated as follows:
$$\begin{array}{ll}
\al (b, j), (f,g)\cdot (a, i) \ar & \!\!=
\al (a, i)\cdot (b, j), (f,g) \ar \vspace{0.1cm}\\ & =
\al (ab, i \theta(b)+ \theta(a)j+ij), (f,g) \ar \vspace{0.1cm} \\ &
= \al ab, f\ar+ \al \theta(a)j, g\ar+ \al i\theta(b), g\ar+ \al ij, g\ar \vspace{0.1cm} \\ &
= \al b, f\cdot a \ar+ \al j, g \cdot \theta(a)\ar+ \al \theta(b), g\cdot i\ar+ \al j, g\cdot i\ar \vspace{0.1cm} \\ &
= \al b, f\cdot a \ar+ \al j, g \cdot \theta(a)\ar+ \al b, \theta^*(g\cdot i)\ar+ \al j, g\cdot i\ar \vspace{0.1cm} \\ &
= \al b, f\cdot a+ \theta^*(g\cdot i) \ar+ \al j, g \cdot (\theta(a)+ i)\ar,
\end{array}$$
and so
\begin{equation}\label{eq1}
(f,g)\cdot (a, i)= (f\cdot a+ \theta^*(g\cdot i), g \cdot (\theta(a)+ i)).
\end{equation}
Further, let $F_1\in A^{**}$, $F_2\in I^{**}$. Then, in order to calculate $(F_1, F_2)\cdot (f,g) \in (A\bowtie^{\, \theta}I)^*$, one has
$$\begin{array}{ll}
\al (a, i), (F_1, F_2)\cdot (f,g) \ar & \!\!=
\al (f,g) \cdot (a, i), (F_1, F_2) \ar \vspace{0.1cm}\\ & =
\al (f\cdot a+ \theta^*(g\cdot i), g \cdot (\theta(a)+ i)), (F_1,F_2) \ar \vspace{0.1cm} \\ &
= \al f\cdot a+ \theta^*(g\cdot i), F_1\ar+ \al g \cdot (\theta(a)+ i), F_2\ar  \vspace{0.1cm} \\ &
= \al a, F_1\cdot f \ar+ \al i, \theta^{**}(F_1) \cdot g\ar+ \al \theta(a), F_2\cdot g\ar +\al i, F_2\cdot g\ar \vspace{0.1cm} \\ &
= \al a, F_1\cdot f+ \theta^*(F_2\cdot g) \ar+ \al i, F_2\cdot g+ \theta^{**}(F_1) \cdot g\ar \vspace{0.1cm} \\ &
= \al (a, i), (F_1\cdot f+ \theta^*(F_2 \cdot g), F_2\cdot g+ \theta^{**}(F_1)\cdot g)\ar.
\end{array}$$
Thus
\begin{equation}\label{eq2}
(F_1, F_2)\cdot (f,g)=(F_1\cdot f+ \theta^*(F_2 \cdot g), F_2\cdot g+ \theta^{**}(F_1)\cdot g).
\end{equation}
Now for $(F_1,F_2), (G_1,G_2)\in (A\bowtie^{\, \theta}I)^{**}\cong A^{**} \oplus_1 I^{**}$ and $(f,g)\in (A\bowtie^{\, \theta}I)^*\cong A^*\oplus_{\infty} I^*$, using \eqref{eq1} and \eqref{eq2}, we have
$$\begin{array}{ll}
\al (f,g), (F_1,F_2) \Box (G_1,G_2) \ar \!\!\!\! &=
\al (G_1,G_2) \cdot (f,g), (F_1, F_2) \ar \vspace{0.1cm}\\ & =
\al (G_1\cdot f+ \theta^*(G_2\cdot g), G_2 \cdot g + \theta^{**}(G_1)\cdot g), (F_1,F_2) \ar \vspace{0.1cm} \\ &
= \al G_1\cdot f+ \theta^*(G_2\cdot g), F_1 \ar + \al G_2 \cdot g + \theta^{**}(G_1)\cdot g, F_2 \ar \vspace{0.1cm} \\ &
= \al f, F_1\Box G_1\ar +\al G_2\cdot g, \theta^{**}(F_1) \ar + \al g, F_2\Box G_2 \ar + \al \theta^{**}(G_1)\cdot g, F_2 \ar \vspace{0.1cm} \\ &
= \al f, F_1\Box G_1\ar +\al g, \theta^{**}(F_1) \Box G_2 \ar + \al g, F_2\Box G_2 \ar + \al g, F_2 \Box \theta^{**}(G_1) \ar \vspace{0.1cm} \\ &
= \al f, F_1\Box G_1\ar +\al g, \theta^{**}(F_1) \Box G_2 + F_2\Box G_2 + F_2 \Box \theta^{**}(G_1) \ar \vspace{0.1cm} \\ &
= \al (f,g), (F_1\Box G_1, \theta^{**}(F_1\Box G_1)+ \theta^{**}(F_1) \Box G_2 + F_2\Box G_2 + F_2 \Box \theta^{**}(G_1) \ar.
\end{array}$$
Therefore,
$$(F_1,F_2) \Box (G_1,G_2)=(F_1\Box G_1, \theta^{**}(F_1) \Box G_2 + F_2\Box G_2 + F_2 \Box \theta^{**}(G_1)).$$
This completes the proof.
\end{proof}
Similarly, as notation in the proof of Theorem \ref{t1}, one can show that
\begin{equation}\label{eq4}
(a,i) \cdot (f,g)= (a\cdot f+ \theta^*(i\cdot g), (\theta(a)+ i) \cdot g),
\end{equation}
$$(f,g) \cdot (F_1, F_2)=(f\cdot F_1+ \theta^*(g \cdot F_2), g\cdot F_2+ g\cdot \theta^{**}(F_1)), \vspace{0.1cm}$$
$$(F_1,F_2) \Diamond (G_1,G_2)=(F_1 \Diamond G_1, \theta^{**}(F_1) \Diamond G_2 +   F_2 \Diamond \theta^{**}(G_1)+ F_2\Diamond G_2).$$
Therefore,
$$((A\bowtie^{\, \theta}I)^{**}, \Diamond)=(A^{**}, \Diamond) \bowtie^{\, \theta^{**}} (I^{**}, \Diamond).$$


\section{Topological Centres}


Let $A$ be a Banach algebra and $X$ a Banach $A$-bimodule. Then $X^{**}$ is canonically an $(A^{**}, \Box)$-bimodule ($(A^{**}, \Diamond)$-bimodule), see \cite[Page 248]{Dal}. Let $x''\in X^{**}$ and let $L_{x^{''}}, R_{x^{''}}: (A^{**}, \Box)\rightarrow X^{**}$ be the left and right multiplication operators, respectively, i.e.
$$L_{x^{''}}(a^{''})=x^{''}\Box a^{''}=\lim_{\beta}\lim_{\alpha} x_{\beta} a_{\alpha} \quad {\rm and} \quad R_{x^{''}}(a^{''})=a^{''}\Box x^{''}= \lim_{\alpha}\lim_{\beta} a_{\alpha} x_{\beta} \qquad (a^{''}\in A^{**}),$$
where $(a_{\alpha})$ and $(x_{\beta})$ are nets in $A^{**}$ and $X^{**}$, respectively, in such a way that $a_{\alpha}\to a^{''}$ in the $w^*$-topology of $A^{**}$ and  $x_{\beta}\to x^{''}$ in the $w^*$-topology of $X^{**}$. \\
Likewise let $\L_{x^{''}}, \R_{x^{''}}: (A^{**}, \Diamond)\rightarrow X^{**}$ be the left and right multiplication operators, respectively, i.e.
$${\L}_{x^{''}}(a^{''})=x^{''}\Diamond a^{''}=\lim_{\alpha}\lim_{\beta} x_{\beta} a_{\alpha} \quad {\rm and} \quad {\R}_{x^{''}}(a^{''})= a^{''}\Diamond x^{''}=\lim_{\beta}\lim_{\alpha} a_{\alpha} x_{\beta} \qquad (a^{''}\in A^{**}).$$
The left and right topological centres, $Z_A^{\ell,t}(X^{**})$ and $Z_A^{r,t}(X^{**})$ of $X^{**}$ are
$$Z^{\ell,t}_A(X^{**})=\{x'' \in X^{**}: L_{x''} = {\L}_{x''} \}=\{x'' \in X^{**}: x'' \Box a'' = x'' \Diamond a'', \ \forall a''\in A^{**}\},$$
and
$$Z^{r,t}_A(X^{**})=\{x'' \in X^{**}: R_{x''} = {\R}_{x''} \}=\{x'' \in X^{**}: a'' \Box x'' = a'' \Diamond x'', \ \forall a''\in A^{**} \},$$
respectively. Then we say that $X$ is {\it Arens regular} (as an $A$-bimodule) or {\it $A$ acts regularly on $X$} if
$$Z_A^{\ell,t} (X^{**})=Z_A^{r,t}(X^{**})=X^{**},$$
and $X$ is {\it left strongly Arens irregular} if $Z_A^{\ell,t}(X^{**})=X$, {\it right strongly
Arens irregular} if $Z_A^{r,t}(X^{**})=X$, and {\it strongly Arens irregular} if it is
both left and right strongly Arens irregular.

If $X=A$, we will use the common notation $Z_t^\ell(A^{**})$ and $Z_t^r(A^{**})$ in place of $Z^{\ell,t}_A(A^{**})$ and $Z^{r,t}_A(A^{**})$, respectively.

Now, let $B$ be a Banach algebra, $I$ be a closed ideal in $B$ and $\theta: A\to B$ be a continuous Banach algebra homomorphism. Then we define
$$Z^\ell_{\theta^{**}} (A^{**})=\{F\in Z_t^\ell(A^{**}): \theta^{**}(F) \in Z^\ell_I(\theta(A)^{**})\},$$
note that $\theta(A)^{**}=\theta^{**}(A^{**})$ (\cite[Page 251]{Dal}), where
$$Z^\ell_I(\theta(A)^{**})=\{F\in \theta(A)^{**}: L_F={\L}_F \ {\rm on} \ I^{**}\} = \{F\in \theta(A)^{**}: F\Box G=F\Diamond G, \ \forall G\in I^{**}\}.$$
\begin{theorem}\label{t2}
With above notation and assumptions, one has
$$Z_t^\ell((A\bowtie^{\, \theta} I)^{**})=Z_t^\ell(A^{**}\bowtie^{\, \theta^{**}} I^{**})= Z^\ell_{\theta^{**}} (A^{**}) \bowtie^{\, \theta^{**}} (Z^\ell_t(I^{**}) \cap Z^{\ell, t}_{\theta(A)}(I^{**})).$$
\end{theorem}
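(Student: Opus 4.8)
My plan is to deduce everything from Theorem~\ref{t1} and its $\Diamond$-analogue, which present $((A\bowtie^{\,\theta}I)^{**},\Box)$ and $((A\bowtie^{\,\theta}I)^{**},\Diamond)$ as $(A^{**},\Box)\bowtie^{\,\theta^{**}}(I^{**},\Box)$ and $(A^{**},\Diamond)\bowtie^{\,\theta^{**}}(I^{**},\Diamond)$. Identifying $(A\bowtie^{\,\theta}I)^{**}$ with $A^{**}\oplus_1 I^{**}$ via Corollary~\ref{c1}, a pair $(F_1,F_2)$ lies in $Z_t^\ell((A\bowtie^{\,\theta}I)^{**})$ precisely when
$$(F_1,F_2)\Box(G_1,G_2)=(F_1,F_2)\Diamond(G_1,G_2)\qquad(G_1\in A^{**},\ G_2\in I^{**}).$$
Reading off the two product formulas, the first coordinate is $F_1\Box G_1=F_1\Diamond G_1$, and the second coordinate is
$$\theta^{**}(F_1)\Box G_2+F_2\Box G_2+F_2\Box\theta^{**}(G_1)=\theta^{**}(F_1)\Diamond G_2+F_2\Diamond\theta^{**}(G_1)+F_2\Diamond G_2.$$
Since these identities must hold for all $(G_1,G_2)$ and are linear in $G_1$ and in $G_2$ separately, I would feed in $G_2=0$ and $G_1=0$ independently to break the second coordinate into two families of constraints; as the defining condition is jointly linear in $(G_1,G_2)$, this extracts all the information there is.

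The inclusion $\supseteq$ is then a direct substitution. If $F_1\in Z^\ell_{\theta^{**}}(A^{**})$ and $F_2\in Z^\ell_t(I^{**})\cap Z^{\ell,t}_{\theta(A)}(I^{**})$, then $F_1\in Z_t^\ell(A^{**})$ settles the first coordinate; the condition $\theta^{**}(F_1)\in Z^\ell_I(\theta(A)^{**})$ together with $F_2\in Z^\ell_t(I^{**})$ settles the $G_2$-terms; and $F_2\in Z^{\ell,t}_{\theta(A)}(I^{**})$ settles the $\theta^{**}(G_1)$-terms, so summing recovers the full second-coordinate identity. Conversely, for $\subseteq$ the first coordinate gives $F_1\in Z_t^\ell(A^{**})$ by definition, and the substitution $G_2=0$ yields $F_2\Box\theta^{**}(G_1)=F_2\Diamond\theta^{**}(G_1)$ for all $G_1\in A^{**}$, that is $F_2\in Z^{\ell,t}_{\theta(A)}(I^{**})$, since $\theta^{**}(A^{**})=\theta(A)^{**}$.

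The crux — the step I expect to be the genuine obstacle — is the remaining substitution $G_1=0$ in the $\subseteq$ direction, which delivers only the \emph{coupled} relation
$$\theta^{**}(F_1)\Box G_2-\theta^{**}(F_1)\Diamond G_2=-\bigl(F_2\Box G_2-F_2\Diamond G_2\bigr)\qquad(G_2\in I^{**}),$$
whereas the claimed right-hand side insists that each side vanish on its own, i.e.\ $\theta^{**}(F_1)\in Z^\ell_I(\theta(A)^{**})$ and $F_2\in Z^\ell_t(I^{**})$. Both Arens-defect terms take values in the ideal $I^{**}$, so this decoupling is \emph{not} a formal consequence of the displayed equality: it is exactly the point at which the homomorphism part $\theta^{**}(F_1)\in\theta(A)^{**}$ must be disentangled from the ideal part $F_2\in I^{**}$, even though their products with $G_2$ land in the same space.

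To force the splitting I would exploit the asymmetric $w^*$-continuity of the two Arens products. For a fixed left factor the maps $G_2\mapsto\theta^{**}(F_1)\Diamond G_2$ and $G_2\mapsto F_2\Diamond G_2$ are $w^*$-continuous, while $G_2\mapsto(\,\cdot\,)\Box G_2$ need not be; so I would write $\theta^{**}(F_1)=w^*\text{-}\lim_\gamma\theta(a_\gamma)$ and $F_2=w^*\text{-}\lim_\delta i_\delta$ with nets in $\theta(A)$ and $I$, express each of the four products as an iterated $w^*$-limit of elements $\theta(a_\gamma)j_\beta$ and $i_\delta j_\beta$ of $I$ (where $j_\beta\to G_2$), and then use the facts already in hand: $F_1\in Z_t^\ell(A^{**})$ forces $\theta^{**}(F_1)\Box K=\theta^{**}(F_1)\Diamond K$ for all $K\in\theta(A)^{**}$, and $F_2\in Z^{\ell,t}_{\theta(A)}(I^{**})$ controls the mixed products against $\theta(A)^{**}$. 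The technical heart is to show that the $\theta(A)^{**}$-driven defect and the $I^{**}$-driven defect cannot cancel one another nontrivially, so that the coupled relation above splits as required; once this separation lemma is in place, everything else reduces to bookkeeping with the two product formulas.
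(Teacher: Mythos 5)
You are right to isolate the decoupling of
$$\theta^{**}(F_1)\Box G_2-\theta^{**}(F_1)\Diamond G_2=-\bigl(F_2\Box G_2-F_2\Diamond G_2\bigr)\qquad(G_2\in I^{**})$$
as the crux, and your proposal does stop exactly where a proof would have to begin: the ``separation lemma'' you defer to the end is never established, so the argument is incomplete. For the record, everything before that point (the coordinatewise comparison via Theorem \ref{t1}, the inclusion $\supseteq$, and the extraction of $F_1\in Z_t^\ell(A^{**})$ and $F_2\in Z^{\ell,t}_{\theta(A)}(I^{**})$) coincides with the paper's own proof; and the paper is no better off, since its final ``if and only if'' simply asserts that the second-coordinate identity splits into the three separate centre conditions, with no justification. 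The real problem is that the gap cannot be filled: no separation lemma of the kind you envisage exists, because the decoupling --- and with it the stated equality --- is false in general. Both Arens defects take values in $I^{**}$, exactly as you observed, and when $\theta(A)\subseteq I$ they can cancel nontrivially.

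Here is a concrete counterexample. Let $C=\ell^1(\mathbb{Z})$ with convolution, let $B=C\oplus C$ be the module extension Banach algebra of $C$ by itself (Example \ref{exa1}(iii) with $X=C$), let $A$ be the Banach space $\ell^1(\mathbb{Z})$ endowed with the \emph{zero} product, let $\theta:A\to B$ be $\theta(a)=(0,a)$ (a homomorphism, since $\{0\}\times C$ squares to zero in $B$), and take $I=B$. Since $A^2=0$, all Arens products on $A^{**}$ vanish, so $Z_t^\ell(A^{**})=A^{**}$; also $\theta^{**}(F)=(0,F)$ for $F\in A^{**}=C^{**}$, and $\theta^{**}(F)\Box\theta^{**}(F')=\theta^{**}(F\Box F')=0$. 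Consequently, for \emph{any} $F_1\in A^{**}$ the element $\Xi=(F_1,-\theta^{**}(F_1))$ satisfies, by the product formula of Theorem \ref{t1},
$$\Xi\Box(F_2,G_2)=\bigl(F_1\Box F_2,\ \theta^{**}(F_1)\Box G_2-\theta^{**}(F_1)\Box G_2-\theta^{**}(F_1)\Box\theta^{**}(F_2)\bigr)=(0,0),$$
and likewise $\Xi\Diamond(F_2,G_2)=(0,0)$, so $\Xi\in Z_t^\ell((A\bowtie^{\,\theta}I)^{**})$. Now choose $F_1\in C^{**}\setminus C$. By Lau--Losert \cite{LL}, $Z_t^\ell(C^{**})=C$, so there is $H\in C^{**}$ with $F_1\Box H\neq F_1\Diamond H$; computing in $B^{**}=C^{**}\oplus C^{**}$ gives
$$\theta^{**}(F_1)\Box(H,0)=(0,F_1\Box H)\neq(0,F_1\Diamond H)=\theta^{**}(F_1)\Diamond(H,0),$$
so $\theta^{**}(F_1)\notin Z^\ell_I(\theta(A)^{**})$, i.e.\ $F_1\notin Z^\ell_{\theta^{**}}(A^{**})$ (and also $-\theta^{**}(F_1)\notin Z_t^\ell(I^{**})$). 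Thus $\Xi$ lies in the left-hand side of the theorem but not in the right-hand side. The failure goes unnoticed in the paper's worked examples because in each of them one of the two defects vanishes identically (e.g.\ $X^2=0$ for module extensions, $\theta(A)^{**}=\mathbb{C}$ for Lau products, or $\theta(A)^{**}$ absorbing $I^{**}$ when $\theta$ is a surjection or the identity), so cancellation can never occur. The correct conclusion from your analysis is therefore not a missing continuity lemma but a missing hypothesis: one needs an assumption forcing one of the defects to vanish (or preventing $\theta(A)^{**}$ and $I^{**}$ from interacting as above), and any attempt to derive the splitting from $w^*$-continuity alone must fail, since it would prove a false statement.
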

\begin{proof}
Let $(F_1, G_1)\in Z_t^\ell(A^{**}\bowtie^{\, \theta^{**}} I^{**})$. Then $L_{(F_1, G_1)}=\L_{(F_1, G_1)} $ if and only if for all $(F_2,G_2)\in A^{**}\bowtie^{\, \theta^{**}} I^{**}$,
$$(F_1, G_1) \Box (F_2, G_2)=(F_1, G_1) \Diamond (F_2, G_2) ,$$
if and only if
$$(F_1 \Box F_2, \theta^{**}(F_1)\Box G_2+ G_1\Box \theta^{**}(F_2)+ G_1\Box G_2)=
(F_1 \Diamond F_2, \theta^{**}(F_1)\Diamond G_2+ G_1\Diamond \theta^{**}(F_2)+ G_1\Diamond G_2), $$
if and only if $L_{F_1}=\L_{F_1}$, $L_{\theta^{**}(F_1)}=\L_{\theta^{**}(F_1)}$ on $I^{**}$, $L_{G_1}=\L_{G_1}$ on $I^{**}$ and $\theta(A)^{**}$. Hence,
$$(F_1, G_1)\in Z^\ell_{\theta^{**}} (A^{**}) \bowtie^{\, \theta^{**}} (Z^\ell_t(I^{**}) \cap Z^{\ell, t}_{\theta(A)}(I^{**})).$$
\end{proof}
Using above theorem we characterize Arens regularity and strong Arens irregularity of $A\bowtie^{\, \theta}I$.
\begin{cor}\label{c2}
$A\bowtie^{\, \theta}I$ is Arens regular if and only if $A$ and $I$ are Arens regular, $\theta(A)$ acts regularly on $I$ and $I$ acts regularly on $\theta(A)$.
\end{cor}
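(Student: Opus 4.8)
The plan is to derive the corollary directly from the topological-centre computation in Theorem \ref{t2}, using the standard fact that a Banach algebra $C$ is Arens regular if and only if $Z_t^\ell(C^{**}) = C^{**}$. Indeed, $Z_t^\ell(C^{**}) = C^{**}$ says precisely that $F \Box G = F \Diamond G$ for all $F, G \in C^{**}$, which is the coincidence of the two Arens products, i.e. Arens regularity; so I only need to decide when the left topological centre of $(A\bowtie^{\,\theta}I)^{**}$ exhausts the whole bidual, and Theorem \ref{t1} identifies this bidual with $A^{**}\bowtie^{\,\theta^{**}}I^{**}$.

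First I would insert the formula of Theorem \ref{t2} and test equality against the full algebra $A^{**}\bowtie^{\,\theta^{**}}I^{**}$. Because the amalgamation is the $l^1$-direct sum of its two coordinates, the inclusion
\[
Z^\ell_{\theta^{**}}(A^{**}) \bowtie^{\,\theta^{**}} \bigl(Z_t^\ell(I^{**}) \cap Z^{\ell,t}_{\theta(A)}(I^{**})\bigr) \subseteq A^{**}\bowtie^{\,\theta^{**}}I^{**}
\]
is an equality if and only if it is one in each coordinate, that is, if and only if $Z^\ell_{\theta^{**}}(A^{**}) = A^{**}$ and $Z_t^\ell(I^{**}) \cap Z^{\ell,t}_{\theta(A)}(I^{**}) = I^{**}$.

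Next I would unravel these two equalities with the definitions that precede Theorem \ref{t2}. Since $Z^\ell_{\theta^{**}}(A^{**}) = \{F \in Z_t^\ell(A^{**}) : \theta^{**}(F) \in Z^\ell_I(\theta(A)^{**})\}$, fullness of this set is equivalent to $Z_t^\ell(A^{**}) = A^{**}$ together with $\theta^{**}(A^{**}) = \theta(A)^{**} \subseteq Z^\ell_I(\theta(A)^{**})$, i.e. $Z^\ell_I(\theta(A)^{**}) = \theta(A)^{**}$. The first says $A$ is Arens regular and the second says $I$ acts regularly on $\theta(A)$. In the same way $Z_t^\ell(I^{**}) \cap Z^{\ell,t}_{\theta(A)}(I^{**}) = I^{**}$ splits into $Z_t^\ell(I^{**}) = I^{**}$, i.e. $I$ is Arens regular, and $Z^{\ell,t}_{\theta(A)}(I^{**}) = I^{**}$, i.e. $\theta(A)$ acts regularly on $I$. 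Each implication is reversible, which yields the converse at the same time.

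I expect the one genuinely delicate point to be the transition from the one-sided quantities produced by the left topological centre to the two-sided regularity statements in the corollary. For the algebras this is automatic, because $Z_t^\ell(A^{**}) = A^{**}$ already forces $\Box = \Diamond$ throughout $A^{**}$, hence the full Arens regularity of $A$, and likewise for $I$. For the actions one observes that $Z^{\ell,t}_{\theta(A)}(I^{**}) = I^{**}$ and $Z^\ell_I(\theta(A)^{**}) = \theta(A)^{**}$ are each equivalent to the agreement of the two iterated Arens extensions of the corresponding module bilinear map, so they already encode the regularity of the action of $\theta(A)$ on $I$ and of $I$ on $\theta(A)$ respectively; the $\Diamond$-analogue of Theorem \ref{t2} recorded at the end of Section 3 may be invoked to see that the right topological centre imposes no further conditions, confirming that the four requirements above are exactly those listed.
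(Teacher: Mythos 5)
Your proposal is correct and follows exactly the route the paper intends: the paper states Corollary \ref{c2} as an immediate consequence of Theorem \ref{t2}, and your argument is precisely the required unraveling --- identifying Arens regularity with fullness of the left topological centre, splitting the product set coordinatewise, and translating $Z^\ell_{\theta^{**}}(A^{**})=A^{**}$ and $Z^\ell_t(I^{**})\cap Z^{\ell,t}_{\theta(A)}(I^{**})=I^{**}$ into the four stated conditions. Your closing observation that the left-centre conditions already capture both mixed products (so the $\Diamond$-analogue adds nothing) correctly disposes of the only delicate point.
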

\begin{cor}\label{c3}
$A\bowtie^{\, \theta}I$ is strongly Arens irregular if and only if $A$ and $I$ are strongly Arens irregular, $I$ acts strongly irregular on $\theta(A)$ and $\theta(A)$ acts strongly irregular on $I$.
\end{cor}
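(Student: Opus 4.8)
The goal is Corollary~\ref{c3}, characterizing strong Arens irregularity of $A\bowtie^{\, \theta}I$. The plan is to deduce it directly from Theorem~\ref{t2} together with its right-handed analogue, exactly as Corollary~\ref{c2} is deduced for Arens regularity, so that the argument is purely a matter of unwinding the definitions of the various topological centres.

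First I would recall that $A\bowtie^{\, \theta}I$ is strongly Arens irregular precisely when $Z_t^\ell((A\bowtie^{\, \theta}I)^{**})$ and $Z_t^r((A\bowtie^{\, \theta}I)^{**})$ both coincide with the copy of $A\bowtie^{\, \theta}I$ inside its bidual. By Corollary~\ref{c1} this copy is $A\oplus_1 I$ sitting inside $A^{**}\oplus_1 I^{**}$, i.e. the set of pairs $(F,G)$ with $F\in A$ and $G\in I$. By Theorem~\ref{t2} (and by the completely parallel computation for the right topological centre, which I would state as the right-handed version of that theorem), we have
$$Z_t^\ell((A\bowtie^{\, \theta}I)^{**})= Z^\ell_{\theta^{**}}(A^{**}) \bowtie^{\, \theta^{**}} \bigl(Z^\ell_t(I^{**}) \cap Z^{\ell,t}_{\theta(A)}(I^{**})\bigr),$$
and the analogous identity with $r$ in place of $\ell$ throughout. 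So the proof reduces to matching the first-coordinate and second-coordinate conditions separately.

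Next I would argue the two coordinates. For the left centre to equal $A\oplus_1 I$, the first coordinate $Z^\ell_{\theta^{**}}(A^{**})$ must equal $A$, and the second coordinate $Z^\ell_t(I^{**})\cap Z^{\ell,t}_{\theta(A)}(I^{**})$ must equal $I$. Since $A\subseteq Z^\ell_t(A^{**})$ always and, for $F\in A$, $\theta^{**}(F)=\theta(F)\in\theta(A)\subseteq Z^\ell_{\theta(A)}(\theta(A)^{**})$ with $\theta(A)$ acting on $I^{**}$ in the canonical way, one checks $A\subseteq Z^\ell_{\theta^{**}}(A^{**})$; the reverse inclusion forces $Z^\ell_t(A^{**})=A$, i.e. $A$ is left strongly Arens irregular, together with the condition $\theta^{**}(Z^\ell_t(A^{**}))\subseteq Z^\ell_I(\theta(A)^{**})$ which becomes automatic once $A=Z^\ell_t(A^{**})$. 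Similarly the equality of second coordinates with $I$ forces $Z^\ell_t(I^{**})=I$ (left strong Arens irregularity of $I$) and $Z^{\ell,t}_{\theta(A)}(I^{**})=I$ (that $\theta(A)$ acts left strongly irregularly on $I$). Running the mirror-image argument with the right topological centre gives the right strong Arens irregularity of $A$ and $I$ and that $I$ acts right strongly irregularly on $\theta(A)$ and $\theta(A)$ acts right strongly irregularly on $I$. Assembling the left and right statements and recalling that strong Arens irregularity means both, one obtains exactly the four stated conditions.

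The main obstacle I anticipate is bookkeeping rather than depth: one must be careful that the intersection of the two coordinate-wise conditions collapses correctly and that no spurious extra condition survives. In particular, the subtlety is verifying that the requirement $\theta^{**}(F)\in Z^\ell_I(\theta(A)^{**})$ appearing in the definition of $Z^\ell_{\theta^{**}}(A^{**})$ does not impose a separate constraint once $A$ is already known to be strongly Arens irregular, and dually that the ``$I$ acts strongly irregular on $\theta(A)$'' and ``$\theta(A)$ acts strongly irregular on $I$'' conditions emerge cleanly from the two mixed terms $\theta^{**}(F_1)\Box G_2$ and $G_1\Box\theta^{**}(F_2)$ (respectively their $\Diamond$ counterparts). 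Once these identifications are made, the corollary follows immediately, so no genuinely hard estimate or new construction is required.
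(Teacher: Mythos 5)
Your overall route is the same as the paper's: Corollary~\ref{c3} is stated there, like Corollary~\ref{c2}, as an immediate consequence of Theorem~\ref{t2} (plus its right-handed analogue), with no written argument, and your plan of matching coordinates inside $A^{**}\oplus_1 I^{**}$ is exactly the intended one. Your ``if'' direction is fine: if $Z_t^\ell(A^{**})=A$ then $Z^\ell_{\theta^{**}}(A^{**})=\{a\in A:\theta(a)\in Z^\ell_I(\theta(A)^{**})\}=A$, because canonical images of algebra elements always multiply bidual elements identically under $\Box$ and $\Diamond$; and if $Z_t^\ell(I^{**})=I$ and $Z^{\ell,t}_{\theta(A)}(I^{**})=I$ then the second coordinate collapses to $I$; similarly on the right.

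The genuine gap is in your ``only if'' direction, precisely at the point you dismissed as bookkeeping. From $Z^\ell_t(I^{**})\cap Z^{\ell,t}_{\theta(A)}(I^{**})=I$ you conclude that \emph{both} $Z^\ell_t(I^{**})=I$ and $Z^{\ell,t}_{\theta(A)}(I^{**})=I$; but an intersection of two sets each containing $I$ can equal $I$ without either factor doing so. Likewise $Z^\ell_{\theta^{**}}(A^{**})=A$ does not force $Z_t^\ell(A^{**})=A$, since $Z^\ell_{\theta^{**}}(A^{**})$ is cut out of $Z_t^\ell(A^{**})$ by an extra membership condition. Moreover, the condition ``$I$ acts strongly irregular on $\theta(A)$'', i.e.\ $Z^\ell_I(\theta(A)^{**})=\theta(A)$ together with its right version, is never derived in your argument at all --- and it cannot be, because, as you yourself note, $Z^\ell_I(\theta(A)^{**})$ enters the decomposition only through the membership $\theta^{**}(F)\in Z^\ell_I(\theta(A)^{**})$, which is automatic for $F\in A$. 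These defects are not repairable: take $\theta=0$ and $A=I=L^1(G)$ for an infinite locally compact group $G$. By Example~\ref{exa2}(i) and \cite[Theorem 1]{LL}, $A\bowtie^{\,0}I=A\oplus I$ is strongly Arens irregular, yet $Z^{\ell,t}_{\theta(A)}(I^{**})=Z^{\ell,t}_{\{0\}}(I^{**})=I^{**}\neq I$ since $L^1(G)$ is not reflexive, so ``$\theta(A)$ acts strongly irregular on $I$'' fails. Hence the ``only if'' direction of the statement as written is false (a defect inherited from the paper, which asserts the corollary without proof); what Theorem~\ref{t2} actually yields is the equivalence of strong Arens irregularity of $A\bowtie^{\,\theta}I$ with $Z^\ell_{\theta^{**}}(A^{**})=A$ and $Z^\ell_t(I^{**})\cap Z^{\ell,t}_{\theta(A)}(I^{**})=I$ together with their right-handed analogues, of which the four conditions in the corollary are sufficient but not necessary.
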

In the following example we determine topological centres of some amalgamated Banach algebras.
\begin{exa}\label{exa2}
Keep the notation of Example \ref{exa1}.
\begin{enumerate}
\item[(i)] If $\theta=0$, then
$$ Z_t^{\ell}((A\oplus I)^{**})=Z_t^{\ell}((A\bowtie^{\, 0}I)^{**})= Z_t^{\ell}(A^{**}) \bowtie^{\, 0} Z^{\ell}_t(I^{**}) = Z_t^{\ell}(A^{**}) \oplus Z^{\ell}_t(I^{**}).$$
\item[(ii)] $Z^{\ell}_t((A^{\#})^{**})={\mathbb C} \oplus Z^{\ell}_t(A^{**})= Z^{\ell}_t(A^{**})^{\#}$.
\item[(iii)] (\cite{EF}) Let ${\mathcal S}=A\oplus X$ be the module extension Banach algebra corresponding $A$ and $X$. Then, by noting that $\theta$ is the canonical embedding of $A$ into $\mathcal S$, and $I=X$ with $X^2=0$, we have
$$Z^\ell_{\theta^{**}} (A^{**})=\{F\in Z_t^\ell(A^{**}): F \in Z^\ell_X(A^{**})\}= Z_t^\ell(A^{**}) \cap Z^\ell_X(A^{**}),$$
and
$$Z^\ell_t(I^{**}) \cap Z^{\ell, t}_{\theta(A)}(I^{**})=Z^\ell_t(X^{**}) \cap Z^{\ell, t}_A(X^{**})=Z^\ell_t(X^{**})=X^{**}.$$
Therefore,
$$Z_t^{\ell}({\mathcal{S}}^{**})=(Z_t^\ell(A^{**}) \cap Z^\ell_X(A^{**})) \bowtie^{\theta^{**}} X^{**},$$
that is, $Z_t^{\ell}({\mathcal{S}}^{**})$ is  the module extension Banach algebra corresponding $Z_t^\ell(A^{**}) \cap Z^\ell_X(A^{**})$ and $X^{**}$.
\item[(iv)] $Z_t^{\ell}((A\bowtie^{\, \phi} {\mathbb C})^{**})=Z_t^{\ell}(A^{**})\bowtie^{\, \phi} {\mathbb C}$. Details are similar to details of the next general case.
\item[(v)] (\cite[Corollary 2.13]{San}) For computing $Z_t^{\ell}((A\oplus_{\phi}B)^{**})$ we note that since $\theta: A\to B^{\#}$ is defined by $\theta(a)=(\phi(a), 0)=\phi(a)$, one can easily check that $\theta^{**}: A^{**} \rightarrow (B^{\#})^{**}={\mathbb C}\oplus B^{**}$ is given by $\theta^{**}(F)=(F(\phi), 0)=F(\phi)=\phi(F)$. So
$$Z^\ell_{\theta^{**}} (A^{**})= Z^\ell_{\phi} (A^{**})=Z^\ell_{t} (A^{**}),$$
and
$$Z^{\ell, t}_{\theta(A)}(I^{**})=Z^{\ell, t}_{\mathbb C}(B^{**})=B^{**}.$$
Whence
$$Z_t^{\ell}((A\oplus_{\phi}B)^{**})=Z^\ell_{t} (A^{**})  \bowtie^{\theta^{**}} Z^\ell_t(B^{**})=Z^\ell_{t} (A^{**}) \oplus_{\phi} Z^\ell_t(B^{**}).$$
\item[(vi)] If $I=B$ and $\theta$ is surjective, then
$$Z_t^{\ell}((A\bowtie^{\, \theta} B)^{**})=(Z_t^\ell(A^{**}) \cap (\theta^{**})^{-1}(Z^\ell_t(B^{**}))) \bowtie^{\rm id} Z^\ell_t(B^{**}).$$
\item[(vii)] Assume that $B=A$. Then
$$Z_t^{\ell}((A\bowtie^{\, {\rm id}} I)^{**})=(Z_t^\ell(A^{**}) \cap Z^\ell_I(A^{**})) \bowtie^{\rm id} (Z^\ell_t(I^{**})\cap Z^{\ell, t}_{A}(I^{**})).$$
\item[(ix)] $Z_t^{\ell}((A\bowtie^{\, {\rm id}} A)^{**})=Z_t^{\ell}(A^{**})\bowtie^{\, \id} Z_t^{\ell}(A^{**})$.
\item[(x)] Let $B=I=A^{**}$ and let $\iota: A \rightarrow A^{**}$ be the cononical injection. Then
$$Z^{\ell}_{A^{**}}(\iota(A)^{**})=Z^{\ell}_{A^{**}}(A^{**})=A^{**},$$
and so $Z_{\iota^{**}}^{\ell}(A^{**})=Z_t^{\ell}(A^{**})$. Also $Z^{\ell, t}_{\iota(A)}((A^{**})^{**})=Z^{\ell, t}_{A}(A^{****})=A^{****}$, and thus
$$Z_t^{\ell}((A\bowtie^{\, {\iota}} A^{**})^{**})=Z_t^{\ell}(A^{**})\bowtie^{\, \iota^{**}} Z_t^{\ell}(A^{****}).$$
\end{enumerate}
\end{exa}
\begin{exa}\label{exa3}
By Example \ref{exa2} in mind we have the followings:
\begin{enumerate}
\item[(i)] The Banach algebra $A\bowtie^{\, 0} I$ is Arens regular if and only if $A$ and $I$ are Arens regular.
\item[(ii)] The unitization of $A$, $A^{\#}$, is Arens regular if and only if $A$ is Arens regular.
\item[(iii)] The module extension Banach algebra ${\mathcal S}=A\oplus X$ is Arens regular if and only if $A$ is Arens regular and $A$ acts regularly on $X$.
\item[(iv)] $A\bowtie^{\, \phi} {\mathbb C}$ is Arens regular if and only if $A$ is Arens regular.
\item[(v)] $A\oplus_{\phi}B$ is Arens regular if and only if $A$ and $B$ are Arens regular.
\item[(vi)] If $I=B$ and $\theta$ is surjective, then $A\bowtie^{\, \theta} B$ is Arens regular if and only if $A$ and $B$ are Arens regular.
\item[(vii)] If $B=A$, then $A\bowtie^{\, \theta} I$ is Arens regular if and only if $A$ is Arens regular and $A$ and $I$ act regularly on each other.
\item[(ix)] The Banach algebra $A\bowtie^{\, {\rm id}} A$ is Arens regular if and only if $A$ is Arens regular.
\item[(x)] The Banach algebra $A\bowtie^{\, {\iota}} A^{**}$  is Arens regular if and only if $A$ and $A^{**}$ are Arens regular.
\end{enumerate}
\end{exa}
\begin{exa}\label{exa4}
Let $G$ be an infinite locally compact group. Then by \cite[Theorem 1]{LL} we have $Z_t^{\ell}(L^1(G)^{**})=L^1(G)$, and so
$$Z_t^{\ell}((L^1(G)\bowtie^{\, {\rm id}} L^1(G))^{**})=Z_t^{\ell}(L^1(G)^{**})\bowtie^{\, \id} Z_t^{\ell}(L^1(G)^{**})=L^1(G)\bowtie^{\, \id} L^1(G).$$
Therefore, $L^1(G)\bowtie^{\, \id} L^1(G)$ is strongly Arens irregular.
\end{exa}


\section{Characters of $A\bowtie^{\, \theta} I$}


In this section, first, we provide a characterization of character space of $A\bowtie^{\, \theta} I$, and then we calculate the Jacobson radical of $A\bowtie^{\, \theta} I$ when it is commutative.
\begin{theorem}\label{t9}
Let $\sigma(A)\neq \emptyset$ and $\overline{\theta(A)I\cup I\theta(A)}=I$. Then $\sigma(A\bowtie^{\, \theta} I)=E\cup F$, where
$$E=\{((i\cdot \psi)\circ \theta, \psi): \psi\in \sigma(I), \,  i\in I, \, \psi(i)=1\}, \quad F=\{(\phi, 0): \phi \in \sigma(A)\}.$$
Moreover, $E$ is open and $F$ is closed in $\sigma(A\bowtie^{\, \theta} I)$.
\end{theorem}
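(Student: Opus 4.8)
The plan is to use the duality $(A\bowtie^{\, \theta}I)^*\cong A^*\oplus_\infty I^*$ from Proposition \ref{p1} to write an arbitrary $\chi\in\sigma(A\bowtie^{\, \theta}I)$ as a pair $(f,g)$ with $f\in A^*$, $g\in I^*$ and $\langle(a,i),(f,g)\rangle=f(a)+g(i)$. First I would expand the multiplicativity identity $\chi((a,i)(a',i'))=\chi(a,i)\chi(a',i')$ using the product of $A\bowtie^{\, \theta}I$, obtaining
\[
f(aa')+g(\theta(a)i')+g(i\theta(a'))+g(ii')=f(a)f(a')+f(a)g(i')+g(i)f(a')+g(i)g(i').
\]
Specializing the four ``pure'' choices of arguments ($i=i'=0$; $a=a'=0$; $a'=i=0$; $a=i'=0$) extracts the equivalent system: $f$ is multiplicative on $A$, $g$ is multiplicative on $I$, and the two cross relations
\[
g(\theta(a)i')=f(a)g(i'),\qquad g(i\theta(a'))=g(i)f(a')
\]
hold for all $a,a'\in A$, $i,i'\in I$. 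Conversely, adding these four identities recovers the full multiplicativity identity, so it remains to solve the system with $(f,g)\neq 0$.

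Next I would split according to $g$. If $g=0$ the cross relations are vacuous and $f$ must be a nonzero character, i.e.\ $\phi\in\sigma(A)$; this gives exactly $F$. If $g\neq 0$ then $g=\psi\in\sigma(I)$, and choosing $i\in I$ with $\psi(i)=1$ and putting $i'=i$ in the first cross relation yields $f(a)=\psi(\theta(a)i)=((i\cdot\psi)\circ\theta)(a)$, so $(f,g)\in E$. Notice this already establishes $\sigma(A\bowtie^{\, \theta}I)\subseteq E\cup F$ \emph{without} the density hypothesis, and $F\subseteq\sigma(A\bowtie^{\, \theta}I)$ is immediate.

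The substance of the theorem is the reverse inclusion $E\subseteq\sigma(A\bowtie^{\, \theta}I)$: given $\psi\in\sigma(I)$ and $i$ with $\psi(i)=1$, I must verify that $f:=(i\cdot\psi)\circ\theta$ together with $\psi$ satisfies the whole system. Multiplicativity of $\psi$ is given, and multiplicativity of $f$ follows once the first cross relation is known (apply it with $i'$ replaced by $\theta(a')i$ and use $f(a')=\psi(\theta(a')i)$). Thus everything reduces to the two cross relations, e.g.\ $\psi(\theta(a)n)=\psi(\theta(a)i)\psi(n)$ for all $n\in I$ together with its mirror image. This is exactly where I would invoke $\overline{\theta(A)I\cup I\theta(A)}=I$: both sides are bounded and linear in $n$, so it suffices to verify the identity on the dense spanning set, i.e.\ on $n=\theta(c)j$ and $n=j\theta(c)$, where the multiplicativity of $\psi$ on products lying in $I$ and the normalization $\psi(i)=1$ are brought to bear. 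I expect this step — equivalently, the assertion that $\ker\psi$ is invariant under left and right multiplication by $\theta(A)$, i.e.\ that $\psi$ extends to a character of the subalgebra $\theta(A)+I$ — to be the main obstacle, and the density hypothesis to be precisely what forces it, since multiplicativity of $\psi$ alone only controls $\psi$ on genuine products within $I$.

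Finally, for the topological statement I would argue directly from the Gelfand (weak\,$^*$) topology. The set $F=\{\chi:\chi|_{\{0\}\times I}=0\}=\bigcap_{i\in I}\{\chi:\langle(0,i),\chi\rangle=0\}$ is an intersection of weak\,$^*$-closed sets, hence closed; it is exactly the hull of the ideal $I$. Its complement in $\sigma(A\bowtie^{\, \theta}I)$ is $\{\chi:\chi|_{\{0\}\times I}\neq 0\}=E$, which is therefore open. This disposes of the last claim.
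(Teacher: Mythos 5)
Your proposal is correct, and its skeleton is the same as the paper's: identify $(A\bowtie^{\,\theta}I)^*$ with $A^*\oplus_\infty I^*$, expand the multiplicativity identity, specialize arguments to conclude that $f$ and $g$ are each multiplicative or zero together with the two cross relations, then split on whether $g=0$. Three differences are worth recording. (1) The paper spends the density hypothesis on the \emph{forward} inclusion: it rules out $f=0$ by noting that then $\psi$ vanishes on $\theta(A)I\cup I\theta(A)$, hence on $I$, a contradiction. You correctly observe this is unnecessary, since a character with $f=0$ and $g=\psi\neq 0$ still lies in $E$ (the cross relation gives $(i\cdot\psi)\circ\theta=0=f$). (2) The paper dismisses the reverse inclusion $E\subseteq\sigma(A\bowtie^{\,\theta}I)$ as ``easy to check'' and omits it, whereas you rightly single it out as the substance and reduce it to the cross relation $\psi(\theta(a)n)=\psi(\theta(a)i)\psi(n)$ and its mirror image. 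Your plan does close, but your diagnosis that density is ``precisely what forces'' it is off: the relation holds for \emph{every} $n\in I$ with no density at all, by the standard trick for extending a character off an ideal. Indeed $\psi(\theta(a)i)\psi(n)=\psi(\theta(a)\,in)$, and for $z:=n-in$, which satisfies $\psi(z)=0$, one gets $\psi(\theta(a)z)=\psi(i)\psi(\theta(a)z)=\psi\bigl((i\theta(a))z\bigr)=\psi(i\theta(a))\psi(z)=0$, using only that $I$ is an ideal and $\psi(i)=1$; the mirror relation is even quicker, since $\psi(n\theta(a))\psi(i)=\psi(n\,\theta(a)i)=\psi(n)\psi(\theta(a)i)$. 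Note also that restricting to the dense set does not simplify matters: for $n=\theta(c)j$ the left side $\psi(\theta(a)\theta(c)j)=\psi(\theta(ac)j)$ is still a product of an element outside $I$ with one inside, so the same insert-$\psi(i)$-and-regroup manipulation is forced, and the density detour buys nothing (in fact the whole equality $\sigma(A\bowtie^{\,\theta}I)=E\cup F$ holds without the density hypothesis, which the paper uses only as in (1)). (3) Your topological argument --- $F$ is the hull of the ideal $\{0\}\times I$, hence relatively $w^*$-closed, and $E$ is its complement in $\sigma(A\bowtie^{\,\theta}I)$ by disjointness --- is cleaner than the paper's, which builds an explicit neighborhood $\{(\phi,\psi): |\psi(i_0)-\psi_0(i_0)|<|\psi_0(i_0)|\}$ around each point of $E$ and checks that it misses $F$; both are valid.
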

\begin{proof}
Let $(\phi, \psi)\in \sigma(A\bowtie^{\, \theta} I)$ and $(a,i), (a',i')\in A\bowtie^{\, \theta} I$. Then
\begin{equation}\label{eq9}
\begin{array}{ll}
\phi(aa')+ \psi(\theta(a)i'+ i\theta(a)+ii') & = \al (\phi, \psi), (aa', \theta(a)i'+ i\theta(a)+ii') \ar \vspace{0.1cm} \\ &
=  \al (\phi, \psi), ((a,i)\cdot (a',i') \ar \vspace{0.1cm} \\ &
= (\phi(a)+\psi(i)) (\phi(a')+\psi(i')) \vspace{0.1cm} \\ &
= \phi(a)\phi(a')+\phi(a)\psi(i')+ \psi(i)\phi(a')+\psi(i)\psi(i').
\end{array}
\end{equation}
By taking $a=a'=0$ we see that $\psi\in \sigma(I)\cup \{0\}$. Next by taking $i=i'=0$ it follows that $\phi\in \sigma(A)\cup \{0\}$. But from \eqref{eq9}, $\phi=0$ implies $\psi(\theta(a)i')+\psi(i\theta(a'))=0$ for all $a,a'\in A$ and $i,i'\in I$, from which it follows that $\psi=0$ on $\theta(A)I\cup I\theta(A)$, and hence $\psi=0$. But this is a contradiction since $(\phi, \psi)\in \sigma(A\bowtie^{\, \theta} I)$ and so $(\phi, \psi)\neq (0,0)$. Now we have two cases:

Case I: If $\psi=0$, then $(\phi, \psi)=(\phi, 0)$.

Case II: If $\psi\neq 0$, then by \eqref{eq9} we have
$$\psi(\theta(a)i')+\psi(i\theta(a'))-\psi(i)\phi(a')-\phi(a)\psi(i')=0,$$
which implies (take $a'=0$, $i=0$),
$$\psi(\theta(a)i')=\phi(a)\psi(i') \qquad (a\in A, \, i'\in I).$$
Choose $i'\in I$ such that $\psi(i')=1$, then $\phi(a)=\psi(\theta(a)i')=(i' \cdot \psi)\circ \theta (a)$ for all $a\in A$. Therefore, $(\phi, \psi)=((i' \cdot \psi)\circ \theta, \psi)$ with $\psi(i')=1$.

Since the reverse inclusion is easy to check, so we omit its proof.

Now we show that $E$ is open in the $w^*$-topology of $\sigma(A\bowtie^{\, \theta} I)$ induced from $w^*$-topology of $A^*\times I^*$. Let $((i\cdot \psi_0)\circ \theta, \psi_0) \in \sigma(A\bowtie^{\, \theta} I)$. Then there is $i_0 \in I$ in such a way that $\psi(i_0)\neq 0$. Let $\varepsilon=|\psi(i_0)|$. Then
$$\begin{array}{ll}
U \!\!\! & =\{(\phi, \psi)\in \sigma(A\bowtie^{\, \theta} I): |(\phi, \psi)(0,i_0)- ((i\cdot \psi_0)\circ \theta, \psi_0)(0,i_0)|<\varepsilon\} \hspace{0.1cm} \\ &
=\{(\phi, \psi)\in \sigma(A\bowtie^{\, \theta} I): |\psi(i_0)- \psi_0(i_0)|<\varepsilon\},
\end{array}$$
is a neighborhood of $((i\cdot \psi_0)\circ \theta, \psi_0)$ in the
$w^*$-topology of $\sigma(A\bowtie^{\, \theta} I)$. Since $(\phi,
0)\in U$ leads to the contradiction $|\psi_0(i_0)|<\varepsilon$, it
follows that $U\subseteq E$. Therefore, $E$ is open and $F$ is
closed.
\end{proof}
\begin{cor}\label{c9} $($\cite[Proposition 2.4]{San}$)$
Let $\sigma(A)\neq \emptyset$ and $\phi\in \sigma(A)$. Then
$$\sigma(A\oplus_{\phi} B)=\{(\varphi, 0): \varphi \in \sigma(A)\} \cup \{(\phi, \psi): \psi\in \sigma(B)\}= (\sigma(A)\times \{0\}) \cup (\{\phi\}\times \sigma(B)).$$
\end{cor}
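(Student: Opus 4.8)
The plan is to realize $A\oplus_{\phi}B$ as the amalgamated algebra $A\bowtie^{\,\theta}B$ described in Example \ref{exa1}(v), so that $I=B$ is a closed ideal in $B^{\#}$ and $\theta:A\to B^{\#}$ is given by $\theta(a)=(\phi(a),0)$, and then to read off the character space directly from Theorem \ref{t9}. First I would verify the two hypotheses of that theorem. The assumption $\sigma(A)\neq\emptyset$ is granted. For the density condition $\overline{\theta(A)I\cup I\theta(A)}=I$, I would use that $\phi$ is a nonzero character: choosing $a_0\in A$ with $\phi(a_0)=1$ and computing in $B^{\#}$ gives $\theta(a_0)\cdot i=(\phi(a_0),0)\cdot(0,i)=(0,\phi(a_0)i)=i$ for every $i\in I=B$. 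Hence $\theta(A)I=I$ outright, and the closure condition holds trivially.

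With the hypotheses in place, Theorem \ref{t9} yields $\sigma(A\oplus_{\phi}B)=E\cup F$. The set $F=\{(\varphi,0):\varphi\in\sigma(A)\}=\sigma(A)\times\{0\}$ is already in the desired form, so the only remaining work is to simplify $E$. The key computation is to evaluate the first coordinate $(i\cdot\psi)\circ\theta$ of a typical element of $E$, where $\psi\in\sigma(B)$ and $i\in B$ satisfy $\psi(i)=1$. For $a\in A$, unwinding the module action (via $\langle i\cdot\psi, y\rangle=\langle\psi, yi\rangle$) together with the product $\theta(a)i=\phi(a)i$ computed above, one obtains
$$(i\cdot\psi)\circ\theta(a)=\psi(\theta(a)i)=\psi(\phi(a)i)=\phi(a)\psi(i)=\phi(a),$$
using linearity of $\psi$ and $\psi(i)=1$. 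Thus the first coordinate collapses to the fixed character $\phi$, independently of the choice of $i$. Conversely, since each $\psi\in\sigma(B)$ is nonzero it admits some $i\in B$ with $\psi(i)=1$, so every pair $(\phi,\psi)$ with $\psi\in\sigma(B)$ does arise. Therefore $E=\{\phi\}\times\sigma(B)$, and combining with $F$ gives the stated decomposition $\sigma(A\oplus_{\phi}B)=(\sigma(A)\times\{0\})\cup(\{\phi\}\times\sigma(B))$.

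I would expect no genuine obstacle, as the corollary is a direct specialization of Theorem \ref{t9}; the only point requiring care is the bookkeeping of the product $\theta(a)i$ inside $B^{\#}$ (where $\theta(a)=(\phi(a),0)$ acts as a scalar multiple of the adjoined identity) so that $\phi(a)$ can be pulled through $\psi$. One should also keep the notation straight, since $\phi$ denotes the fixed character of the corollary whereas the symbol $\phi$ ranges over $\sigma(A)$ in the statement of Theorem \ref{t9}; renaming the latter to $\varphi$ keeps the two roles distinct.
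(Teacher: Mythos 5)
Your proposal is correct and follows the same route as the paper: the paper's proof is exactly the one-line observation that $(i\cdot \psi)\circ \theta=\theta$ (i.e.\ $=\phi$ under the identification $\theta(a)=(\phi(a),0)$) whenever $\psi(i)=1$, applied to Theorem \ref{t9}. Your additional verification of the density hypothesis $\overline{\theta(A)I\cup I\theta(A)}=I$ via $\theta(a_0)i=i$ for $\phi(a_0)=1$ is a detail the paper leaves implicit, and it is carried out correctly.
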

\begin{proof}
It is enough to note that $(i\cdot \psi)\circ \theta=\theta$ if $\psi(i)=1$.
\end{proof}
Let $A$ be a commutative Banach algebra. The {\it radical} of $A$, $\rad A$, is the intersection of the kernels of all characters of $A$. Also $A$ is called {\it semisimple} if $\rad A=\{0\}$.
\begin{theorem}\label{t11}
Let $A \bowtie^{\, \theta}I$ be commutative, $\sigma(A)\neq \emptyset$ and $\overline{\theta(A)I}=I$. Then $\rad(A\bowtie^{\, \theta} I)=\rad A\oplus \rad I$.
\end{theorem}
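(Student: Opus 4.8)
The plan is to read off everything from the explicit description of the character space furnished by Theorem \ref{t9}. First I would check that its hypotheses are available here: since $A\bowtie^{\,\theta}I$ is commutative, Proposition \ref{p7}(ii) forces $\theta(A)+I$ to be commutative, whence $\theta(a)i=i\theta(a)$ for all $a\in A$, $i\in I$, so that $\theta(A)I=I\theta(A)$ and the assumption $\overline{\theta(A)I}=I$ automatically upgrades to $\overline{\theta(A)I\cup I\theta(A)}=I$. Theorem \ref{t9} then gives $\sigma(A\bowtie^{\,\theta}I)=E\cup F$ with the stated $E$ and $F$, and by the definition of the radical adopted just before the statement,
$$\rad(A\bowtie^{\,\theta}I)=\bigcap\set{\ker(\phi,\psi):(\phi,\psi)\in E\cup F}.$$
Thus $(a,i)$ lies in the radical precisely when $\phi(a)+\psi(i)=0$ for every character $(\phi,\psi)$, and the whole proof reduces to testing this condition against the two families $E$ and $F$.

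For the inclusion $\rad(A\bowtie^{\,\theta}I)\subseteq \rad A\oplus\rad I$ I would first feed $(a,i)$ into the characters of $F$: since $(\phi,0)(a,i)=\phi(a)$, vanishing on all of $F$ says exactly that $\phi(a)=0$ for every $\phi\in\sigma(A)$, i.e. $a\in\rad A$. To pin down the second coordinate, fix an arbitrary $\psi\in\sigma(I)$ and pick $i'\in I$ with $\psi(i')=1$ (possible since $\psi\neq0$); then $((i'\cdot\psi)\circ\theta,\psi)\in E$, and evaluating it on $(a,i)$ yields $\psi(\theta(a)i')+\psi(i)=0$. The key point is that $\phi:=(i'\cdot\psi)\circ\theta$ is a genuine element of $\sigma(A)$ (the proof of Theorem \ref{t9} shows that the first coordinate of any character of $A\bowtie^{\,\theta}I$ cannot be zero), so that $\psi(\theta(a)i')=\phi(a)=0$ because $a\in\rad A$. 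Hence $\psi(i)=0$, and as $\psi$ was arbitrary, $i\in\rad I$.

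The reverse inclusion $\rad A\oplus\rad I\subseteq\rad(A\bowtie^{\,\theta}I)$ is then a direct verification. Given $a\in\rad A$ and $i\in\rad I$, the characters in $F$ annihilate $(a,i)$ since $\phi(a)=0$; for a character $((i'\cdot\psi)\circ\theta,\psi)\in E$ we have $\psi(i)=0$ (as $i\in\rad I$) and $\psi(\theta(a)i')=\phi(a)=0$ (as $\phi\in\sigma(A)$ and $a\in\rad A$), so $(\phi,\psi)(a,i)=0$ as well. Therefore $(a,i)$ is killed by every character, i.e. $(a,i)\in\rad(A\bowtie^{\,\theta}I)$, which closes the argument.

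The only delicate step, and the one I would treat most carefully, is the identity $((i'\cdot\psi)\circ\theta)(a)=\psi(\theta(a)i')=\phi(a)$ combined with the fact that the first coordinate $\phi$ of each character is nonzero and so is a bona fide character of $A$ on which the hypothesis $a\in\rad A$ actually bites; the surjectivity-type condition $\overline{\theta(A)I}=I$ is precisely what guarantees (via Theorem \ref{t9}) that no spurious characters escape this bookkeeping. Everything else is routine once $\sigma(A\bowtie^{\,\theta}I)$ is in hand.
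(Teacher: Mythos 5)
Your proposal is correct and follows essentially the same route as the paper: both deduce the result from the description of $\sigma(A\bowtie^{\,\theta}I)$ in Theorem \ref{t9}, testing $(a,i)$ first against the characters $(\phi,0)\in F$ to get $a\in\rad A$, then against $((i'\cdot\psi)\circ\theta,\psi)\in E$ with $\psi(i')=1$ to get $i\in\rad I$, and checking the reverse inclusion by direct evaluation on all characters. Your explicit verification that commutativity upgrades the hypothesis $\overline{\theta(A)I}=I$ to the condition $\overline{\theta(A)I\cup I\theta(A)}=I$ required by Theorem \ref{t9} is a detail the paper leaves implicit, and a welcome one.
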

\begin{proof}
Let $(a,i)\in \rad(A\bowtie^{\, \theta} I)$. Then for each $\phi\in \sigma(A)$, $\phi(a)=(\phi, 0)(a,i)=0$, that is, $a\in \rad A$. Now let $\psi\in \sigma(I)$ and $\psi(j)=1$ for some $j\in I$. Then $(j\cdot \psi)\circ \theta$ belongs to $\sigma(A)$ and so $(j\cdot \psi)\circ \theta(a)=0$. Hence
$$\psi(i)=((j\cdot \psi)\circ \theta, \psi)(a,i)-(j\cdot \psi)\circ \theta(a)=0,$$
and thus $i\in \rad I$.

Conversely let $a\in \rad A$ and $i\in \rad I$. Then for each $\phi\in \sigma(A)$, $(\phi, 0)(a,i)=\phi(a)=0$ and for each $\psi\in \sigma(I)$,
$$((j\cdot \psi)\circ \theta, \psi)(a,i)=(j\cdot \psi)\circ \theta(a)+\psi(i)=0.$$
Therefore, by Theorem \ref{t9}, $(a,i)\in \rad(A\bowtie^{\, \theta} I)$.
\end{proof}
\begin{cor}\label{c11}
Let $A \bowtie^{\, \theta}I$ be commutative, $\sigma(A)\neq \emptyset$ and $\overline{\theta(A)I}=I$. Then $A\bowtie^{\, \theta} I$ is semisimple if and only if both $A$ and $I$ are semisimple.
\end{cor}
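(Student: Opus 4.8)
The plan is to deduce the statement immediately from Theorem \ref{t11}, which already computes the radical. Recall that, by definition, a commutative Banach algebra is semisimple precisely when its radical equals $\{0\}$. Before applying this I would first check that the phrase ``$A$ and $I$ are semisimple'' is even meaningful here, i.e. that $A$ and $I$ are themselves commutative: by Proposition \ref{p7}(i) the algebra $A$ is isomorphic to the closed subalgebra $A\times\{0\}$ and $I$ is isomorphic to the closed ideal $\{0\}\times I$ of $A\bowtie^{\, \theta}I$, and any subalgebra of a commutative algebra is commutative, so commutativity of $A\bowtie^{\, \theta}I$ passes to both $A$ and $I$.

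With commutativity secured, I would invoke Theorem \ref{t11} under the standing hypotheses $\sigma(A)\neq\emptyset$ and $\overline{\theta(A)I}=I$ to write $\rad(A\bowtie^{\, \theta}I)=\rad A\oplus\rad I$. Since this identification realizes the radical as an internal direct sum sitting inside $A\oplus_1 I$, the subspace $\rad A\oplus\rad I$ equals $\{0\}$ if and only if both $\rad A=\{0\}$ and $\rad I=\{0\}$. Translating back through the definition of semisimplicity, this is exactly the assertion that $A\bowtie^{\, \theta}I$ is semisimple if and only if both $A$ and $I$ are semisimple, giving both directions of the equivalence at once.

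There is essentially no genuine obstacle to overcome, since all of the substantive work is already contained in Theorem \ref{t11}; the corollary is a formal consequence. The only points requiring a modicum of care are the bookkeeping just described, namely verifying that $A$ and $I$ inherit commutativity so that the conclusion is well posed, and observing that the decomposition of $\rad(A\bowtie^{\, \theta}I)$ is an honest vector-space direct sum, so that vanishing of the whole is equivalent to simultaneous vanishing of each summand.
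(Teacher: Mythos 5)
Your proposal is correct and matches the paper's intent exactly: the corollary is stated immediately after Theorem \ref{t11} as a direct consequence of the radical formula $\rad(A\bowtie^{\,\theta}I)=\rad A\oplus\rad I$, which is precisely how you argue. The extra bookkeeping you include (that $A$ and $I$ inherit commutativity, and that the direct sum vanishes iff each summand does) is sound and only makes explicit what the paper leaves implicit.
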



\section{Weak Amenability}


Let $A$ be a Banach algebra and $X$ a Banach $A$-bimodule. A {\it derivation} from $A$ into $X$ is a bounded linear map satisfying $D(ab) = a\cdot D(b) + D(a)\cdot b$ for all $a, b\in A$. For each $x\in X$ we denote by ${\ad}_x$ the derivation $D(a)=a \cdot x-x \cdot a$ for all $a\in A$, called an {\it inner derivation}. We denote by $\Z^1(A, X)$ the space of all derivations from $A$ into $X$ and by $\B^1(A, X)$ the space of all inner derivations from $A$ into $X$. The {\it first cohomology group} of $A$ with coefficients in $X$ is $\H^1(A, X)=\Z^1(A, X)/ \B^1(A, X)$. A Banach algebra $A$ is called {\it weakly amenable} if $\H^1(A,A^*)=0$.

In \cite{FM}, B. E. Forrest and L. W. Marcoux have investigated the weak amenability of triangular Banach algebras, and Y. Zhang has studied the weak amenability of module extension Banach algebras \cite{Zha}. Motivated by these earlier investigations, in this section, we study the weak amenability of amalgamated Banach algebra $A\bowtie^{\, \theta}I$.
\begin{theorem}\label{t10}
If $A\bowtie^{\, \theta} I$ is commutative, then $A\bowtie^{\, \theta} I$ is weakly amenable if and only if $A$ and $I$ are weakly amenable.
\end{theorem}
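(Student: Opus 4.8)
The plan is to use throughout that, since $A\bowtie^{\theta}I$ is commutative, Proposition \ref{p7}(ii) forces $A$, $I$ and $\theta(A)+I$ all to be commutative, so that $A^{*}$, $I^{*}$ and $(A\bowtie^{\theta}I)^{*}\cong A^{*}\oplus_{\infty}I^{*}$ are all \emph{symmetric} bimodules. Hence every inner derivation into these duals vanishes, and weak amenability of any of $A$, $I$, $A\bowtie^{\theta}I$ is equivalent to the statement that there is no nonzero continuous derivation into the respective dual. I would also record the standard fact that weak amenability of a Banach algebra $C$ forces $\overline{C^{2}}=C$ (otherwise a nonzero $f$ annihilating $\overline{C^{2}}$ yields the nonzero derivation $c\mapsto\langle f,c\rangle f$), so in particular weak amenability of $I$ gives $\overline{I^{2}}=I$. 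I will work with a derivation $\mathcal D=(\mathcal D_{1},\mathcal D_{2})\colon A\bowtie^{\theta}I\to A^{*}\oplus_{\infty}I^{*}$, and read off from \eqref{eq1} and \eqref{eq4} the two scalar identities (with $c=(a,i)$, $c'=(a',i')$)
\[
\mathcal D_{1}(cc')=a\cdot\mathcal D_{1}(c')+\theta^{*}(i\cdot\mathcal D_{2}(c'))+\mathcal D_{1}(c)\cdot a'+\theta^{*}(\mathcal D_{2}(c)\cdot i'),
\]
\[
\mathcal D_{2}(cc')=(\theta(a)+i)\cdot\mathcal D_{2}(c')+\mathcal D_{2}(c)\cdot(\theta(a')+i').
\]

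For the "if'' direction, assume $A$ and $I$ are weakly amenable and let $\mathcal D$ be as above; I must show $\mathcal D=0$. Restricting the second identity to $\{0\}\times I$ shows that $i\mapsto\mathcal D_{2}(0,i)$ is a derivation $I\to I^{*}$, hence identically $0$, and (as above) $\overline{I^{2}}=I$. Restricting the first identity to $A\times\{0\}$ shows that $a\mapsto\mathcal D_{1}(a,0)$ is a derivation $A\to A^{*}$, hence $0$. The two remaining mixed components are killed using $\overline{I^{2}}=I$: the second identity applied to $(a,0)\cdot(0,i')=(0,\theta(a)i')$ gives $\mathcal D_{2}(a,0)\cdot i'=\mathcal D_{2}(0,\theta(a)i')=0$ for all $i'$, so $\mathcal D_{2}(a,0)$ annihilates $\overline{I^{2}}=I$ and is $0$; the first identity applied to $(0,i)\cdot(0,i')=(0,ii')$ gives $\mathcal D_{1}(0,ii')=\theta^{*}(i\cdot\mathcal D_{2}(0,i')+\mathcal D_{2}(0,i)\cdot i')=0$, so $\mathcal D_{1}(0,\cdot)$ vanishes on $\overline{I^{2}}=I$. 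Adding, $\mathcal D(a,i)=\mathcal D(a,0)+\mathcal D(0,i)=0$.

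For the "only if'' direction, assume $A\bowtie^{\theta}I$ is weakly amenable. For $A$: given a derivation $D\colon A\to A^{*}$, the map $\tilde D(a,i)=(D(a),0)$ is, by the identities above, a derivation into the dual (both mixed terms contain a factor $\tilde{\mathcal D}_{2}\equiv 0$), so weak amenability gives $\tilde D=0$, whence $D=0$ and $A$ is weakly amenable. For $I$ I would first prove $\overline{I^{2}}=I$. The quotient map $(a,i)\mapsto(a,\,i+\overline{I^{2}})$ is a surjective homomorphism of $A\bowtie^{\theta}I$ onto the module extension $A\ltimes(I/\overline{I^{2}})$, where the square-zero module $X:=I/\overline{I^{2}}$ carries the $A$-action $a\cdot x=\theta(a)x$ (well defined since $\theta(a)I^{2}\subseteq I^{2}$). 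As quotients of commutative weakly amenable algebras are weakly amenable, $A\ltimes X$ is weakly amenable, which in turn forces $\overline{A\cdot X}=X$. But then for any $\xi\in X^{*}\setminus\{0\}$ the functional $\alpha_{\xi}(x)\colon a\mapsto\langle\xi,a\cdot x\rangle$ is a nonzero $A$-module map $X\to A^{*}$, and $(a,x)\mapsto(\alpha_{\xi}(x),0)$ is a nonzero derivation on $A\ltimes X$; weak amenability therefore forces $X=I/\overline{I^{2}}=0$, i.e.\ $\overline{I^{2}}=I$.

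The main obstacle is the final step: concluding that $\overline{I^{2}}=I$ together with weak amenability of $A\bowtie^{\theta}I$ yields weak amenability of $I$. The naive extension of a derivation $d\colon I\to I^{*}$ to $A\bowtie^{\theta}I$ fails, because $d(\theta(a)i)\neq\theta(a)\cdot d(i)$ in general; the genuine difficulty is this $\theta(A)$-compatibility. I would control it by showing that the defect $R_{a}(i)=d(\theta(a)i)-\theta(a)\cdot d(i)$ is an $I$-bimodule map $I\to I^{*}$ and that, by the same module-map mechanism used to force $X=0$ above, such maps must vanish, so that $d$ does extend and is then annihilated by weak amenability of $A\bowtie^{\theta}I$; alternatively one may simply invoke Grønbæk's theorem that a closed ideal with dense square in a commutative weakly amenable Banach algebra is itself weakly amenable. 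Everything else is the elementary bookkeeping of the first three paragraphs.
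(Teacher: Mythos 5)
Your proposal, with the last step settled by citing Gr{\o}nb{\ae}k's theorem (your alternative (b)), is correct, but it takes a genuinely different and much more self-contained route than the paper. The paper's proof is a one-line citation: by Proposition \ref{p7}, $I\cong\{0\}\times I$ is a closed ideal of $A\bowtie^{\,\theta}I$ with $(A\bowtie^{\,\theta}I)/I\cong A$, and the theorem is deduced from hereditary properties of weak amenability recorded in \cite{Dal} --- passage to quotients (Proposition 2.8.64), extensions (Proposition 2.8.65(ii), which is the ``if'' direction, as in Proposition \ref{p10}), and Gr{\o}nb{\ae}k's ideal theorem (Theorem 2.8.69(i): a closed ideal with \emph{dense square} in a commutative weakly amenable Banach algebra is weakly amenable). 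Your first three paragraphs re-prove the quotient and extension steps by direct computation with the identities coming from \eqref{eq1} and \eqref{eq4} (your $\tilde D$ for the $A$-direction is literally the paper's Proposition \ref{p11}); these computations are correct. The most valuable part of your argument is the fourth paragraph: weak amenability of $A\bowtie^{\,\theta}I$ by itself only gives density of $\theta(A)I+I\theta(A)+I^{2}$ in $I$, not of $I^{2}$, and your passage to the module extension $A\ltimes\bigl(I/\overline{I^{2}}\bigr)$ together with the nonzero derivation $(a,x)\mapsto(\alpha_{\xi}(x),0)$ is a correct proof that $\overline{I^{2}}=I$. That density is exactly the hypothesis of the cited Theorem 2.8.69(i) (and it is unavoidable, since weak amenability of $I$ forces it), yet the paper's one-line proof never verifies it; your write-up supplies a step the paper leaves implicit.

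One caution: your fallback sketch (a) for the final step does not work as described, so you should commit to route (b). The ``module-map mechanism'' of your fourth paragraph depends essentially on $X$ being square-zero; for such $X$ every bounded $A$-module map $X\to A^{*}$ yields a derivation. By contrast, if $R\colon I\to I^{*}$ is an $I$-module map (such as your defect $R_{a}$), the candidate $(a,i)\mapsto(0,R(i))$ on $A\bowtie^{\,\theta}I$ satisfies the derivation identity only when $R$ kills $I^{2}$: restricting the second-component identity to $\{0\}\times I$ forces $R(ii')=i\cdot R(i')+R(i)\cdot i'$, while for a module map $R(ii')=i\cdot R(i')=R(i)\cdot i'$, so $R(ii')=2R(ii')=0$. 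Since $\overline{I^{2}}=I$, only $R=0$ ever produces a derivation this way, and weak amenability of $A\bowtie^{\,\theta}I$ therefore gives no purchase on $R_{a}$ through this mechanism. Eliminating the defect $R_{a}$ is the genuine content of Gr{\o}nb{\ae}k's theorem, whose proof is independent of the devices you have set up; citing it --- it is precisely \cite[Theorem 2.8.69(i)]{Dal}, the same result the paper invokes --- closes the argument.
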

\begin{proof}
This is immediate by Proposition \ref{p7}, \cite[Propositions 2.8.64 and 2.8.65(ii) and Theorem 2.8.69(i)]{Dal} and noting that $\frac{A\bowtie^{\, \theta} I}{I}\cong A$.
\end{proof}
\begin{exa}
Let $G$ be a locally compact abelian group and consider $M(G)=l^1(G) \ltimes M_c(G)$, the semidirect product of $l^1(G)$ and $M_c(G)$; see Example \ref{exa1}(iv). Since $l^1(G)$ is weakly amenable, by above theorem, $M(G)$ is weakly amenable if and only if $M_c(G)$ is weakly amenable.
\end{exa}
In general case, we have one direction of Theorem \ref{t10}.
\begin{pro}\label{p10}
If $A$ and $I$ are weakly amenable, then $A\bowtie^{\, \theta} I$ is also weakly amenable.
\end{pro}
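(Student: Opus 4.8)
The plan is to take an arbitrary continuous derivation $D\colon C\to C^{*}$, where $C=A\bowtie^{\,\theta}I$, and to show it is inner by peeling off two inner derivations (one manufactured from $A$, one from $I$) and then proving that what remains must vanish. Throughout I identify $C^{*}$ with $A^{*}\oplus_{\infty}I^{*}$ via Proposition \ref{p1}, and I use the module–action formulas \eqref{eq1} and \eqref{eq4}. The first observation to record is that the coordinate projections are module morphisms for the natural substructures of Proposition \ref{p7}(i): specializing \eqref{eq1}--\eqref{eq4} to $i=0$ gives $(a,0)\cdot(f,g)=(a\cdot f,\theta(a)\cdot g)$ and $(f,g)\cdot(a,0)=(f\cdot a,g\cdot\theta(a))$, so $\pi_{A}\colon C^{*}\to A^{*}$ is a morphism of $A$-bimodules for the subalgebra $A\times\{0\}$; specializing to $a=0$ gives $(0,i)\cdot(f,g)=(\theta^{*}(i\cdot g),i\cdot g)$ and $(f,g)\cdot(0,i)=(\theta^{*}(g\cdot i),g\cdot i)$, so $\pi_{I}\colon C^{*}\to I^{*}$ is a morphism of $I$-bimodules for the ideal $\{0\}\times I$.

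First I would restrict $D$ to $A\times\{0\}$ and compose with $\pi_{A}$: this is a derivation $A\to A^{*}$ into the natural bimodule, hence inner by weak amenability of $A$, implemented by some $f_{0}\in A^{*}$. Since $\ad_{(f_{0},0)}$ has $A^{*}$-coordinate $a\cdot f_{0}-f_{0}\cdot a$ on $(a,0)$ and annihilates $\{0\}\times I$, subtracting it lets me assume $\pi_{A}D(a,0)=0$, i.e.\ $D(a,0)\in\{0\}\times I^{*}$. Next I restrict the new $D$ to $\{0\}\times I$ and compose with $\pi_{I}$: weak amenability of $I$ makes this inner, implemented by some $g_{0}\in I^{*}$. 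A short check using \eqref{eq1}--\eqref{eq4} shows $\ad_{(0,g_{0})}$ has vanishing $A^{*}$-coordinate on $(a,0)$, so subtracting it preserves the first normalization while achieving $\pi_{I}D(0,i)=0$. After these two reductions I may write $D(a,0)=(0,q(a))$ and $D(0,i)=(p(i),0)$ for bounded linear maps $q\colon A\to I^{*}$ and $p\colon I\to A^{*}$, so that $D(a,i)=(p(i),q(a))$.

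Now I would feed products into the derivation identity. Evaluating $D$ on $(0,i)(0,i')=(0,ii')$ and noting that both right-hand terms contribute $0$ gives $p(ii')=0$, so $p$ annihilates all products $ii'$. Evaluating $D$ on the cross product $(a,0)(0,i)=(0,\theta(a)i)$ and comparing $I^{*}$-coordinates gives $q(a)\cdot i=0$ for every $i\in I$; unravelling the dual action, $\langle q(a),i'i\rangle=0$ for all $i,i'$, so each $q(a)$ also annihilates all products $ii'$. (The remaining products $(a,0)(a',0)$ and $(0,i)(a,0)$ yield the compatibility relations $q(aa')=\theta(a)\cdot q(a')+q(a)\cdot\theta(a')$ and $p(i\theta(a))=p(i)\cdot a$, which I will not need.)

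The decisive step is the fact that a weakly amenable Banach algebra $E$ satisfies $\overline{E^{2}}=E$, where $E^{2}$ denotes the closed linear span of $\{xy:x,y\in E\}$: otherwise $E/\overline{E^{2}}$ is a nonzero algebra with zero product, and any nonzero $f\in(E/\overline{E^{2}})^{*}$, pulled back to $E^{*}$, yields the bounded derivation $a\mapsto f(a)f$, which is non-inner, contradicting weak amenability. Applying this to $I$ gives $\overline{I^{2}}=I$, so the dense subspace spanned by the products $ii'$ on which both $p$ and every $q(a)$ vanish is dense in $I$; by continuity $p=0$ and $q=0$. Hence the normalized $D$ is zero, and the original derivation equals $\ad_{(f_{0},0)}+\ad_{(0,g_{0})}=\ad_{(f_{0},g_{0})}$, an inner derivation, proving weak amenability of $A\bowtie^{\,\theta}I$. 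I expect the crux of the argument to be exactly this last point: there is no hope of killing the residual maps $p$ and $q$ by hand, and it is only the weak amenability of $I$, through $\overline{I^{2}}=I$, that forces them to vanish. The earlier part is routine, the one piece of care being to verify that the second normalization leaves the first intact.
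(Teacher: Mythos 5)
Your proof is correct, but it takes a genuinely different route from the paper's, whose entire argument is a citation: by Proposition \ref{p7}(i), $\{0\}\times I$ is a closed ideal of $A\bowtie^{\,\theta}I$ with quotient isomorphic to $A$, and the extension theorem for weak amenability quoted from \cite[Proposition 2.8.65(ii)]{Dal} (a Banach algebra is weakly amenable whenever a closed ideal and the corresponding quotient are both weakly amenable) finishes the proof in one line. What you have written is, in effect, a self-contained proof of that abstract extension theorem in the concrete amalgamated setting, using the explicit dual-action formulas \eqref{eq1} and \eqref{eq4}: you normalize $D$ first by $\ad_{(f_0,0)}$ (built from weak amenability of $A$), then by $\ad_{(0,g_0)}$ (built from weak amenability of $I$), checking correctly that $\ad_{(f_0,0)}$ annihilates $\{0\}\times I$ and that $\ad_{(0,g_0)}$ has vanishing $A^*$-coordinate on $A\times\{0\}$, so the second subtraction preserves the first normalization; the residual maps $p$ and $q(a)$ are then killed by the essentiality $\overline{I^2}=I$ of the weakly amenable algebra $I$. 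That essentiality is exactly the non-formal ingredient hidden inside the quoted result from \cite{Dal} (and it is a fact the paper itself invokes without proof just before Corollary \ref{c7}), so you have isolated precisely where weak amenability of $I$, as opposed to mere weak amenability of the quotient, is used. The trade-off is clear: the paper's route is two lines but rests on an external structure theorem; yours is longer but elementary and produces the implementing functional $(f_0,g_0)$ explicitly. Two small points to tighten: the non-innerness of the derivation $a\mapsto f(a)f$ in your density lemma deserves its one-line reason --- if it equalled $\ad_g$, then $f(a)f(b)=\langle g,ba\rangle-\langle g,ab\rangle$ would be antisymmetric in $(a,b)$ as well as symmetric, forcing $f=0$; and with the paper's conventions $\langle q(a)\cdot i,i'\rangle=\langle q(a),ii'\rangle$ rather than $\langle q(a),i'i\rangle$, which is harmless since $i,i'$ range over all of $I$ either way.
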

\begin{proof}
It follows immediately from Proposition \ref{p7} and \cite[Proposition 2.8.65(ii)]{Dal}.
\end{proof}
The converse of above proposition does not hold in general. Indeed, it is shown in \cite{JW} that the augmentation ideal $I$ of $L^1(SL(2,{\Bbb R}))$ is not weakly amenable and that its unitization $I^{\#}$ is weakly amenable.
\begin{exa}
Let $G$ be a locally compact group. Since $M(G)=l^1(G) \ltimes M_c(G)$ and $l^1(G)$ is always weakly amenable, by Proposition \ref{p10}, $M(G)$ is weakly amenable, provided that $M_c(G)$ is weakly amenable.
\end{exa}
\begin{exa}
Let $G$ be a locally compact group. Then $l^1(G)\ltimes L^1(G)$ is weakly amenable.
\end{exa}
In order to prove a partial converse of Proposition \ref{p10} we first look at derivations from $A$ to $A^{*}$.
\begin{pro}\label{p11}
$\H^1(A,A^*)$ embeds in  $\H^1(A\bowtie^{\, \theta} I, (A\bowtie^{\, \theta} I)^*)$.
\end{pro}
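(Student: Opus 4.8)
The plan is to exploit the split short exact sequence $0\to I\to A\bowtie^{\,\theta}I\stackrel{\pi}{\longrightarrow}A\to 0$, where $\pi(a,i)=a$ is a (contractive) algebra homomorphism by the product formula, split by the inclusion $\iota(a)=(a,0)$, so that $\pi\circ\iota=\id_A$ (cf. Proposition \ref{p7}(i)). First I would record that, under the identification $(A\bowtie^{\,\theta}I)^*\cong A^*\oplus_{\infty}I^*$ of Proposition \ref{p1}, the map $j\colon A^*\to(A\bowtie^{\,\theta}I)^*$, $f\mapsto(f,0)$, is an injective $A\bowtie^{\,\theta}I$-bimodule homomorphism once $A^*$ is regarded as an $A\bowtie^{\,\theta}I$-bimodule through $\pi$. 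This is precisely \eqref{eq1} and \eqref{eq4} evaluated with $g=0$: one gets $(a,i)\cdot(f,0)=(a\cdot f,0)$ and $(f,0)\cdot(a,i)=(f\cdot a,0)$, so that $A^*\times\{0\}$ is a subbimodule on which the action depends only on $a=\pi(a,i)$.

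Given $D\in\Z^1(A,A^*)$, I would then set $\widetilde D:=j\circ D\circ\pi$, that is $\widetilde D(a,i)=(D(a),0)$. Since $\pi$ is a homomorphism and $D$ a derivation, $D\circ\pi$ is a derivation from $A\bowtie^{\,\theta}I$ into $A^*$ (with the pulled-back action), and postcomposing with the bimodule map $j$ keeps it a derivation; hence $\widetilde D\in\Z^1(A\bowtie^{\,\theta}I,(A\bowtie^{\,\theta}I)^*)$. The assignment $D\mapsto\widetilde D$ is manifestly linear, and it carries inner derivations to inner derivations: if $D=\ad_f$ with $f\in A^*$, then, again by \eqref{eq1} and \eqref{eq4} with $g=0$, one has $\widetilde D(a,i)=(a\cdot f-f\cdot a,0)=\ad_{(f,0)}(a,i)$. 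Thus $D\mapsto\widetilde D$ descends to a well-defined linear map $\H^1(A,A^*)\to\H^1(A\bowtie^{\,\theta}I,(A\bowtie^{\,\theta}I)^*)$.

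The heart of the proof is the injectivity of this induced map, and this is where the splitting $\iota$ is used. Suppose $[\widetilde D]=0$, i.e.\ $\widetilde D=\ad_{(f,g)}$ for some $(f,g)\in A^*\oplus_{\infty}I^*$. Evaluating at $(a,0)$ and using \eqref{eq1} and \eqref{eq4} yields $\ad_{(f,g)}(a,0)=(a\cdot f-f\cdot a,\ \theta(a)\cdot g-g\cdot\theta(a))$, whose first coordinate must coincide with $\widetilde D(a,0)=(D(a),0)$. Hence $D(a)=a\cdot f-f\cdot a=\ad_f(a)$ for every $a\in A$, so $D$ is inner and $[D]=0$ in $\H^1(A,A^*)$, proving injectivity. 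All the computations are routine; the only delicate point, and the thing I would verify most carefully, is the bookkeeping of the bimodule actions \eqref{eq1}--\eqref{eq4}, namely that the $I$-components vanish precisely when the functional has the form $(f,0)$, so that $j$ is genuinely a bimodule map and restriction to $(a,0)$ isolates $D$. I do not anticipate any serious obstacle beyond this verification.
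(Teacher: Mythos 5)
Your proposal is correct and is essentially the paper's own proof: the paper defines the very same map $\tilde D(a,i)=(D(a),0)$ and rests on the claim that $\tilde D$ is a derivation which is inner if and only if $D$ is inner, which is exactly what your computations via \eqref{eq1} and \eqref{eq4} (the identity $\ad_{(f,0)}(a,i)=(a\cdot f-f\cdot a,0)$ for one direction, and evaluation of $\ad_{(f,g)}$ at $(a,0)$ with projection to the first coordinate for the other) establish. The only difference is presentational: you organize the argument through the split exact sequence and the bimodule map $j$, thereby spelling out the verifications the paper dismisses as ``easily checked.''
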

\begin{proof}
Every $D\in \Z^1(A,A^*)$ defines a derivation ${\tilde D}: A\bowtie^{\, \theta} I \to (A\bowtie^{\, \theta} I)^*$ by ${\tilde{D}(a,i)}=(D(a), 0)$, and it can be easily checked that $\tilde D$ is inner if and only if $D$ is inner. It follows that the mapping $D\mapsto {\tilde D}$ induces an embedding from $ \H^1(A, A^*)$ into $\H^1(A\bowtie^{\, \theta} I, (A\bowtie^{\, \theta} I)^*)$.
\end{proof}
\begin{cor}\label{c12}
If $A\bowtie^{\, \theta} I$ is weakly amenable, then so is $A$.
\end{cor}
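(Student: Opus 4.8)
The plan is to obtain this as an immediate consequence of Proposition \ref{p11}. By definition, weak amenability of $A\bowtie^{\, \theta} I$ means precisely that $\H^1(A\bowtie^{\, \theta} I, (A\bowtie^{\, \theta} I)^*)=0$. Proposition \ref{p11} supplies a linear embedding of $\H^1(A, A^*)$ into this cohomology group. A vector space that embeds into the zero space is itself zero, so $\H^1(A, A^*)=0$, which is exactly the assertion that $A$ is weakly amenable.

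The only point deserving emphasis is that the embedding in Proposition \ref{p11} runs in the correct direction: it sends cohomology classes of $A$ into cohomology classes of the larger algebra, so that triviality of the target forces triviality of the source. Concretely, that embedding is induced by $D\mapsto \tilde D$ with $\tilde D(a,i)=(D(a),0)$, and its injectivity on cohomology rests on the equivalence that $\tilde D$ is inner if and only if $D$ is inner. All of this is already carried out in the proof of Proposition \ref{p11}, so here I simply quote it.

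I do not expect any genuine obstacle. There is no construction to perform and no estimate to make: the whole argument is the formal implication that a subobject of the trivial group is trivial. If one preferred a proof that did not invoke Proposition \ref{p11}, the work would instead shift to verifying directly that $\tilde D$ is a derivation and that the inner-derivation correspondence holds, using the product formula for $A\bowtie^{\, \theta} I$ together with the identification $(A\bowtie^{\, \theta} I)^*\cong A^*\oplus_{\infty} I^*$ from Proposition \ref{p1}; but that is precisely the content already established upstream.
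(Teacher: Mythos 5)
Your proof is correct and is exactly the paper's intended argument: Corollary \ref{c12} is stated immediately after Proposition \ref{p11} precisely because the embedding $\H^1(A,A^*)\hookrightarrow \H^1(A\bowtie^{\, \theta} I, (A\bowtie^{\, \theta} I)^*)$ forces $\H^1(A,A^*)=0$ whenever the target vanishes. Your remark that the embedding runs in the correct direction, via $D\mapsto \tilde D$ with innerness preserved and reflected, matches the paper's reasoning exactly.
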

The following corollary has been obtained in \cite{GhL} with a different method. In fact, we have given a short proof for this result.
\begin{cor}\label{c13} $($\cite[Theorem 2.2]{GhL}$)$
Let $A$ be a dual Banach algebra. If $A^{**}$ is weakly amenable, then so is $A$.
\end{cor}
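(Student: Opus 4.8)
The plan is to recognize $A^{**}$ as an amalgamated Banach algebra and then invoke Corollary \ref{c12}. Since $A$ is a dual Banach algebra with predual $A_*$, the structural result quoted in Example \ref{exa1}(vi), namely \cite[Theorem 2.15]{DL}, identifies the second dual as the semidirect product $A^{**}=A\ltimes A_*^{\perp}$, where $A_*^{\perp}=\{F\in A^{**}: F=0 \text{ on } A_*\}$ is a closed ideal of $A^{**}$ and $A$ sits inside $A^{**}$ via the canonical embedding $\iota$. By the discussion in Example \ref{exa1}(vi), every semidirect product of this form is precisely the amalgamated Banach algebra $A\bowtie^{\,\iota}A_*^{\perp}$, so we obtain the identification $A^{**}\cong A\bowtie^{\,\iota}A_*^{\perp}$.

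With this identification in hand, the result is immediate. Assuming that $A^{**}$ is weakly amenable means exactly that $A\bowtie^{\,\iota}A_*^{\perp}$ is weakly amenable. Applying Corollary \ref{c12} with $\theta=\iota$ and $I=A_*^{\perp}$ then yields that the closed subalgebra $A$ is weakly amenable, which is the desired conclusion.

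There is essentially no obstacle here: the only point requiring verification is that the semidirect decomposition of $A^{**}$ coming from \cite{DL} genuinely fits the amalgamated framework, and this is exactly the content of Example \ref{exa1}(vi). Consequently the corollary reduces to a one-line application of Corollary \ref{c12}. This is precisely why the argument is considerably shorter than the original proof of \cite[Theorem 2.2]{GhL}: the amalgamated viewpoint converts the statement into an instance of the already-established fact that weak amenability of $A\bowtie^{\,\theta}I$ passes to the subalgebra $A$.
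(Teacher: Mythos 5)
Your proposal is correct and is precisely the argument the paper intends: the corollary is stated right after Corollary \ref{c12} without an explicit proof, the ``short proof'' being exactly the identification $A^{**}=A\ltimes A_*^{\perp}=A\bowtie^{\,\iota}A_*^{\perp}$ from Example \ref{exa1}(vi) (via \cite[Theorem 2.15]{DL}) followed by an application of Corollary \ref{c12}. Nothing is missing; your write-up just makes the paper's implicit reasoning explicit.
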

Weak amenability of module extension Banach algebras is extensively studied in \cite{Zha}. We are going to characterize the weak amenability of Banach algebras $A\bowtie^{\, \id} A$ and $A\oplus_{\phi} B$.


\subsection{Weak Amenability of $A\bowtie^{\, \id} A$}

Here, we focus on the special case $A\bowtie^{\, \id} A$.
\begin{pro}\label{p5}
Let $A^2$ be dense in $A$. Then $D\in \Z^1(A\bowtie^{\, \id} A, (A\bowtie^{\, \id} A)^*)$ if and only if
$$D(a,b)=(D_1(a)+ D_2(b), D_2(a)+D_2(b)) \qquad (a, b \in A),$$
for some $D_1,D_2\in \Z^1(A,A^*)$. Moreover, $D=\ad_{(f,g)}$ if and only if $D_1=\ad_f$ and $D_2=\ad_g$, where $f,g\in A^*$.
\end{pro}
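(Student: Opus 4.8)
The plan is to write an arbitrary bounded map $D\colon A\bowtie^{\,\id}A \to (A\bowtie^{\,\id}A)^*$ in components and read off what the derivation identity forces. Using Proposition \ref{p1} to identify $(A\bowtie^{\,\id}A)^*\cong A^*\oplus_\infty A^*$, every bounded linear $D$ has the form
$$D(a,b)=(P_1(a)+Q_1(b),\, P_2(a)+Q_2(b)) \qquad (a,b\in A)$$
for uniquely determined bounded linear maps $P_1,P_2,Q_1,Q_2\colon A\to A^*$ (obtained by restricting $D$ to $A\times\{0\}$ and to $\{0\}\times A$). For the module actions I would specialize formulas \eqref{eq1} and \eqref{eq4} to $\theta=\id$ and $I=A$, which give the left action $(a,b)\cdot(f,g)=(a\cdot f+b\cdot g,\,(a+b)\cdot g)$ and the right action $(f,g)\cdot(a,b)=(f\cdot a+g\cdot b,\,g\cdot(a+b))$.

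First I would substitute $D$ and these actions into $D((a,b)(a',b'))=(a,b)\cdot D(a',b')+D(a,b)\cdot(a',b')$ and split it into its two $A^*$-valued component identities. The strategy is then to test these identities on degenerate arguments. Taking $b=b'=0$ collapses everything onto $A\times\{0\}$ and yields exactly $P_1,P_2\in\Z^1(A,A^*)$. Taking $a=a'=0$ shows $Q_2\in\Z^1(A,A^*)$ together with the auxiliary relation $Q_1(bb')=b\cdot Q_2(b')+Q_2(b)\cdot b'=Q_2(bb')$; since the maps are continuous and $A^2$ is dense in $A$, this forces $Q_1=Q_2$.

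The remaining point is to identify $P_2$ with $Q_2$, and this is the step where density of $A^2$ does the essential work. Taking $b=a'=0$ produces $Q_2(ab')=a\cdot Q_2(b')+P_2(a)\cdot b'$; subtracting the Leibniz relation already established for $Q_2$ gives $(Q_2(a)-P_2(a))\cdot b'=0$ for all $b'$, that is, $Q_2(a)-P_2(a)$ annihilates $A^2$ and hence vanishes by density. Thus $P_2=Q_2=Q_1$, and setting $D_1:=P_1$ and $D_2:=P_2$ gives the asserted form with $D_1,D_2\in\Z^1(A,A^*)$. The converse is a direct verification: substituting $D(a,b)=(D_1(a)+D_2(b),\,D_2(a)+D_2(b))$ back into the two component identities, one checks that each reduces, term by term, to the Leibniz rules for $D_1$ and $D_2$.

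For the last assertion I would compute $\ad_{(f,g)}$ explicitly from the two action formulas:
$$\ad_{(f,g)}(a,b)=(a,b)\cdot(f,g)-(f,g)\cdot(a,b)=(\ad_f(a)+\ad_g(b),\,\ad_g(a)+\ad_g(b)).$$
Reading off components and invoking the uniqueness of the decomposition $D\mapsto(D_1,D_2)$ established above, $D=\ad_{(f,g)}$ holds precisely when $D_1=\ad_f$ and $D_2=\ad_g$. I expect the only delicate point of the whole argument to be the two appeals to the density of $A^2$, which is exactly the hypothesis that promotes the coincidences ``on products'' to genuine equalities of maps; everything else is bookkeeping of the Leibniz identity.
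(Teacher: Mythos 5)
Your proof is correct and follows essentially the same route as the paper: decompose $D$ into four component maps, specialize the Leibniz identity to the cases $b=b'=0$, $a=a'=0$, and $a'=b=0$, and invoke the density of $A^2$ twice to force the last three components to coincide. The only cosmetic difference is that you extract the identity $P_2=Q_2$ from the second component where the paper uses the first; this is immaterial, and your bookkeeping is in fact slightly more careful than the paper's (which swaps the labels of the first two specializations).
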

\begin{proof}
Let $D: A\bowtie^{\, \id} A \to (A\bowtie^{\, \id} A)^*$ be a derivation. Then we may write
$$D(a,b)=(D_1(a)+ D_2(b), D_3(a)+D_4(b)) \qquad (a, b \in A),$$
where $D_k:A\to A^*$ $(1\leq k \leq 4)$ is a linear operator. If we use the derivation property of $D$ together with the equations \eqref{eq1} and \eqref{eq4}, we get
$$\hspace{-0.4cm} \begin{array}{ll}
& \big(D_1(a_1a_2)+D_2(a_1b_2+b_1a_2+b_1b_2), D_3(a_1a_2)+D_4(a_1b_2+b_1a_2+b_1b_2)\big) = \vspace{0.1cm} \\ &
 \big( a_1D_1(a_2)+a_1D_2(b_2)+ b_1D_3(a_2)+ b_1 D_4(b_2)+ D_1(a_1)a_2+D_2(b_1)a_2+D_3(a_1)b_2+ D_4(b_1)b_2, \vspace{0.1cm} \\ &
 (a_1 [D_3(a_2)+D_4(b_2)] + b_1[D_3(a_2)+D_4(b_2)]+[D_3(a_1)+D_4(b_1)]a_2+ [D_3(a_1)+D_4(b_1)]b_2 \big).
\end{array}$$
By setting $a_1=a_2=0$, we see that $D_1, D_3\in \Z^1(A,A^*)$. By setting $b_1=b_2=0$ and noting that $A^2$ is dense in $A$, we get $D_2=D_4\in \Z^1(A,A^*)$. Also, by choosing $a_2=b_1=0$, we obtain
$$D_2(a_1b_2)=a_1 \cdot D_2(b_2)+D_3(a_1) \cdot b_2,$$
which implies $D_2(a_1) \cdot b_2=D_3(a_1) \cdot b_2$ for all $a_1, b_2\in A$. Since $A^2$ is dense in $A$, it follows that $D_2=D_3$. The claim about inner derivations can be easily verified.
\end{proof}
\begin{theorem}\label{t7}
Let $A^2$ be dense in $A$. Then, as vector spaces, we have
$${\H}^1(A\bowtie^{\, \id} A, (A\bowtie^{\, \id} A)^*)\cong {\H}^1(A, A^*) \oplus {\H}^1(A, A^*).$$
\end{theorem}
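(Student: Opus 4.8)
The plan is to promote the structural decomposition of derivations obtained in Proposition \ref{p5} to the level of cohomology groups. That proposition already gives us essentially everything: it says that the map $D \mapsto (D_1, D_2)$ sending a derivation on $A\bowtie^{\, \id} A$ to the pair arising from the representation $D(a,b)=(D_1(a)+D_2(b), D_2(a)+D_2(b))$ is a well-defined linear map from $\Z^1(A\bowtie^{\, \id} A, (A\bowtie^{\, \id} A)^*)$ into $\Z^1(A,A^*)\oplus \Z^1(A,A^*)$. The first thing I would verify is that this map is a linear bijection onto $\Z^1(A,A^*)\oplus \Z^1(A,A^*)$: surjectivity is the ``if'' direction of Proposition \ref{p5} (every pair $(D_1,D_2)$ of derivations yields a genuine derivation via the displayed formula), and injectivity is immediate since $D_1$ and $D_2$ are uniquely recovered from $D$ by reading off the two coordinates (e.g. $D_1(a)$ is the first coordinate of $D(a,0)$ and $D_2(b)$ is the first coordinate of $D(0,b)$). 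Thus $\Z^1(A\bowtie^{\, \id} A, (A\bowtie^{\, \id} A)^*)\cong \Z^1(A,A^*)\oplus \Z^1(A,A^*)$ as vector spaces.

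Next I would check that this isomorphism carries the subspace of inner derivations onto the corresponding direct sum of inner derivations. This is precisely the ``moreover'' clause of Proposition \ref{p5}: $D=\ad_{(f,g)}$ if and only if $D_1=\ad_f$ and $D_2=\ad_g$. I would spell out that the map $(f,g)\mapsto \ad_{(f,g)}$ on $(A\bowtie^{\, \id} A)^* \cong A^*\oplus A^*$ corresponds under the identification above to $(\ad_f, \ad_g)$, so that the image of $\B^1(A\bowtie^{\, \id} A, (A\bowtie^{\, \id} A)^*)$ is exactly $\B^1(A,A^*)\oplus \B^1(A,A^*)$. Granting this, the isomorphism of cocycle spaces descends to the quotients, giving
$$\H^1(A\bowtie^{\, \id} A, (A\bowtie^{\, \id} A)^*)\cong \frac{\Z^1(A,A^*)\oplus \Z^1(A,A^*)}{\B^1(A,A^*)\oplus \B^1(A,A^*)}\cong \H^1(A,A^*)\oplus \H^1(A,A^*),$$
where the last step uses that quotienting a direct sum by a direct sum of subspaces factors through the summands.

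The main obstacle, such as it is, lies in the inner-derivation correspondence rather than in the cocycle identification. One must pin down exactly how an element $(f,g)$ of $(A\bowtie^{\, \id} A)^*$ acts through the module structure recorded in equations \eqref{eq1} and \eqref{eq4}, and confirm that the two coordinates of $\ad_{(f,g)}(a,b)$ reduce cleanly to $\ad_f(a)+\ad_g(b)$ in the first slot and $\ad_g(a)+\ad_g(b)$ in the second, matching the $D_1=\ad_f$, $D_2=\ad_g$ pattern. The density hypothesis $\overline{A^2}=A$ is what forces $D_2=D_3=D_4$ in Proposition \ref{p5}, so it is implicitly doing the work of collapsing what a priori could be four independent derivations down to two; I would make sure the inner-derivation bookkeeping is consistent with this collapse. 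Once that is confirmed, the theorem follows formally, and I would state it as an isomorphism of vector spaces exactly as in Proposition \ref{p5}, without any claim of multiplicativity.
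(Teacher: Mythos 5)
Your proposal is correct and follows essentially the same route as the paper: both arguments rest entirely on Proposition \ref{p5}, using its ``if and only if'' clause to identify $\Z^1(A\bowtie^{\, \id} A, (A\bowtie^{\, \id} A)^*)$ with $\Z^1(A,A^*)\oplus \Z^1(A,A^*)$ and its ``moreover'' clause to match inner derivations, then passing to quotients. The only cosmetic difference is that the paper defines the map $\varphi(D_1,D_2)=D$ going into the amalgamated cocycle space, while you work with its inverse $D\mapsto (D_1,D_2)$; this changes nothing of substance.
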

\begin{proof}
Define $\varphi: {\Z}^1(A, A^*) \oplus {\Z}^1(A, A^*) \to {\Z}^1(A\bowtie^{\, \id} A, (A\bowtie^{\, \id} A)^*)$ by $\varphi(D_1, D_2)=D$, where
$$D(a,b)=(D_1(a)+ D_2(b), D_2(a)+D_2(b)) \qquad (a, b \in A).$$
The Proposition \ref{p5} shows that $\varphi$ is well defined and onto. Since $D$ is inner if and only if $D_1$ and $D_2$ are inner, according to the Proposition \ref{p5}, $\varphi$ induces the desired isomorphism.
\end{proof}
Since every weakly amenable Banach algebra is square dense, we have the following corollary.
\begin{cor}\label{c7}
The Banach algebra $A\bowtie^{\, \id} A$ is weakly amenable if and only if $A$ is weakly amenable.
\end{cor}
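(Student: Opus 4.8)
The plan is to read off the equivalence directly from Theorem \ref{t7}, whose sole hypothesis is that $A^2$ be dense in $A$; the remark preceding the statement records that every weakly amenable Banach algebra is square dense, so this hypothesis will be at our disposal in each direction once we invoke weak amenability of the appropriate algebra.

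For the implication that weak amenability of $A$ forces that of $A\bowtie^{\,\id}A$, I would argue as follows. Assuming $\H^1(A,A^*)=0$, weak amenability of $A$ entails $\overline{A^2}=A$, so Theorem \ref{t7} applies and yields the linear isomorphism
$$\H^1(A\bowtie^{\,\id}A,(A\bowtie^{\,\id}A)^*)\cong\H^1(A,A^*)\oplus\H^1(A,A^*).$$
Since the right-hand side is zero, so is the left-hand side, and hence $A\bowtie^{\,\id}A$ is weakly amenable. (Alternatively, this direction is immediate from Proposition \ref{p10} with $I=A$ and $\theta=\id$.)

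For the converse, the quickest route is to specialise Corollary \ref{c12} to $\theta=\id$ and $I=A$: weak amenability of $A\bowtie^{\,\id}A$ then gives weak amenability of $A$ with no density hypothesis required. Should one prefer to stay within the framework of Theorem \ref{t7}, I would instead note that the weakly amenable algebra $A\bowtie^{\,\id}A$ is square dense and push this through the quotient map of Proposition \ref{p7}(i), namely $\frac{A\bowtie^{\,\id}A}{A}\cong A$, to conclude $\overline{A^2}=A$; Theorem \ref{t7} then forces $\H^1(A,A^*)\oplus\H^1(A,A^*)\cong 0$, whence $\H^1(A,A^*)=0$.

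The only point requiring any care is the verification that the square-density hypothesis of Theorem \ref{t7} holds in each direction, and this is supplied in both cases by the general fact that a weakly amenable Banach algebra $B$ satisfies $\overline{B^2}=B$ (otherwise $B/\overline{B^2}$ would be a nonzero Banach algebra with zero multiplication, on which every bounded functional induces a nonzero derivation into the dual, contradicting that quotients of weakly amenable algebras are weakly amenable). All the genuine cohomological content has already been established in Proposition \ref{p5} and Theorem \ref{t7}, so no further obstacle arises.
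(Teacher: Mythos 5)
Your proposal is correct where it matters, and its primary branches actually give the cleanest possible proof: Proposition \ref{p10} (with $I=A$, $\theta=\id$) settles one direction and Corollary \ref{c12} settles the other, with no density hypothesis needed at all; the route through Theorem \ref{t7} plus square density is the paper's own intended argument, and your execution of it is sound --- including the transfer of square density from $A\bowtie^{\,\id}A$ to $A$ through the quotient map of Proposition \ref{p7}(i), which is unobjectionable since density passes through continuous surjections and the first coordinate of a product in $A\bowtie^{\,\id}A$ is a product in $A$.

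The one genuine defect is the parenthetical justification you give for the lemma that a weakly amenable Banach algebra $B$ satisfies $\overline{B^2}=B$: you derive it from the assertion that quotients of weakly amenable Banach algebras are weakly amenable. That assertion is not available. It is a theorem for \emph{commutative} algebras (this is the result of Dales that the paper invokes, under commutativity, in Theorem \ref{t10}), but for general Banach algebras it is not a known hereditary property: given a derivation $D: B/J \to (B/J)^*=J^{\perp}$, weak amenability of $B$ makes the inflated derivation inner via some $f\in B^*$, and one then needs to correct $f$ by an element of $J^{\perp}$, which amounts to extending the $B$-central functional $f|_J$ on $J$ to a trace on all of $B$ --- precisely the obstruction that forces the paper (and Zhang, Forrest--Marcoux before it) to prove descent-to-$A$ results via the embedding of Proposition \ref{p11} rather than via $\frac{A\bowtie^{\,\theta}I}{I}\cong A$. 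Fortunately the lemma itself is standard and has a two-line direct proof that you should substitute: if $\overline{B^2}\neq B$, choose $\lambda\in B^*$, $\lambda\neq 0$, vanishing on $\overline{B^2}$; then $D(b)=\lambda(b)\lambda$ is a bounded derivation from $B$ into $B^*$, since $D(bc)=0$ and $b\cdot D(c)=D(b)\cdot c=0$ for all $b,c\in B$; and if $D=\ad_f$ were inner, then $\lambda(b)\lambda(c)=f(cb)-f(bc)$ for all $b,c$, so taking $c=b$ gives $\lambda(b)^2=0$ for every $b$, i.e. $\lambda=0$, a contradiction. With that substitution (or by simply citing the fact, as the paper does), your proof is complete.
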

\begin{exa}
Let $A$ be a $C^*$-algebra or a group algebra of a locally compact group. Then $A\bowtie^{\, \id} A$ is weakly amenable.
\end{exa}


\subsection{Weak Amenability of $A\oplus_{\phi} B$}

Let $(a,b)\in A\oplus_{\phi} B$ and $(f,g)\in (A\oplus_{\phi} B)^*$. Then $(a,b)\cdot (f,g), (f,g)\cdot (a,b)\in (A\oplus_{\phi} B)^*$ are given by
\begin{equation}\label{eq7}
(a,b)\cdot (f,g)=(a\cdot f+g(b)\phi, \, \phi(a)g+ b\cdot g),
\end{equation}
\begin{equation}\label{eq8}
(f,g)\cdot (a,b)=(f\cdot a+g(b)\phi, \, \phi(a)g+ g\cdot b).
\end{equation}
\begin{pro}\label{p6}
Let $B^2$ be dense in $B$. Then $D\in \Z^1(A\oplus_{\phi} B, (A\oplus_{\phi} B)^*)$ if and only if
$$D(a,b)=(D_1(a)+ D_2(b), D_4(b)) \qquad (a\in A, b\in B),$$
such that
\begin{enumerate}
\item[(i)]{$D_1\in \Z^1(A,A^*)$,}
\item[(ii)]{$D_4\in \Z^1(B,B^*)$,}
\item[(iii)]{$D_2:B\to A$ is a bounded linear map satisfying
\begin{itemize}
\item[(1)]$a\cdot D_2(b)=D_2(b)\cdot a=\phi(a) D_2(b)$ for all $a\in A$ and $b\in B$,
\item[(2)]$D_2(bb')=\al b, D_4(b') \ar \phi+ \al b', D_4(b) \ar \phi$ for all $b,b'\in B$.
\end{itemize}}
\end{enumerate}
Moreover, $D=\ad_{(f,g)}$ if and only if $D_1=\ad_f$, $D_2=0$ and $D_4=\ad_g$ ($f\in A^*, g\in B^*$).
\end{pro}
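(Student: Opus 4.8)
The plan is to characterize $\Z^1(A\oplus_{\phi} B, (A\oplus_{\phi} B)^*)$ by writing a general bounded linear map $D$ in block form and then extracting the constraints imposed by the derivation identity $D(xy)=x\cdot D(y)+D(x)\cdot y$. Since $(A\oplus_{\phi} B)^*\cong A^*\oplus_\infty B^*$ by Proposition \ref{p1}, I would write a candidate derivation as
$$D(a,b)=(D_1(a)+D_2(b),\; D_3(a)+D_4(b)),$$
where $D_1:A\to A^*$, $D_2:B\to A^*$, $D_3:A\to B^*$ and $D_4:B\to B^*$ are bounded linear maps. The first substantive goal is to show that the mixed components $D_2$ and $D_3$ are forced into the special form stated, and in particular that $D_3=0$ (so that the $B^*$-component of $D(a,b)$ depends only on $b$).

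The key computational step is to apply the derivation property to $(x,y)=((a_1,b_1),(a_2,b_2))$, expand both sides using the product formula for $A\oplus_{\phi} B$ together with the module-action formulas \eqref{eq7} and \eqref{eq8}, and then compare the two coordinates. First I would set $b_1=b_2=0$ to isolate the purely $A$-dependent pieces; this should give $D_1\in\Z^1(A,A^*)$ and simultaneously force a relation on $D_3$ restricted to $A$. Next, setting $a_1=a_2=0$ should yield $D_4\in\Z^1(B,B^*)$ together with constraint (2) on $D_2$, namely $D_2(bb')=\langle b,D_4(b')\rangle\phi+\langle b',D_4(b)\rangle\phi$; note the character $\phi$ enters through the $g(b)\phi$ terms in \eqref{eq7} and \eqref{eq8}. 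The mixed substitutions $a_2=b_1=0$ and $a_1=b_2=0$ should then produce constraint (1), the bimodule relation $a\cdot D_2(b)=D_2(b)\cdot a=\phi(a)D_2(b)$, and should also force $D_3$ to vanish. To absorb $D_3$ entirely I expect to invoke density of $B^2$ in $B$ at the crucial moment: a relation of the form $D_3(a)\cdot b = (\text{something vanishing})$ holding for all $b$ in a dense square forces $D_3=0$, mirroring exactly the use of density in the proof of Proposition \ref{p5}.

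The main obstacle will be bookkeeping: keeping the four components cleanly separated while the character $\phi$ mixes the $A$- and $B$-coordinates asymmetrically (the formulas \eqref{eq7}, \eqref{eq8} are not symmetric in $a$ and $b$, unlike the $A\bowtie^{\,\id}A$ case of Proposition \ref{p5}). In particular, care is needed to see that a term such as $\langle b, D_3(a)\rangle\phi$ appearing in the $A^*$-coordinate must be reconciled against $D_2$, which is what ultimately pins down both the $\phi$-scalar form of $D_2$'s values and the vanishing of $D_3$; the hypothesis that $B^2$ is dense is what makes these cancellations rigorous rather than merely formal. Once the four maps are identified, the statement about inner derivations follows by direct computation: for $(f,g)\in A^*\oplus_\infty B^*$ one expands $\ad_{(f,g)}$ using \eqref{eq7}--\eqref{eq8}, reads off that its $A^*$-component is $\ad_f$ on $A$ and $0$ on $B$ (so $D_2=0$), and that its $B^*$-component is $\ad_g$ on $B$, which matches the claimed form. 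This final verification is routine and I would state it as ``easily checked'' in the manner of the earlier proofs.
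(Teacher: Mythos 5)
Your proposal follows essentially the same route as the paper's own proof: the same four-block decomposition $D(a,b)=(D_1(a)+D_2(b),\,D_3(a)+D_4(b))$, the same strategic substitutions (both coordinates from $A$, both from $B$, then the two mixed cases), the same use of density of $B^2$ to force $D_3=0$ and extract conditions (1)--(2), and the same routine verification for inner derivations. No gaps; note only that the statement's ``$D_2:B\to A$'' is a typo for $D_2:B\to A^*$, which you correctly use.
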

\begin{proof}
Let $D: A\oplus_{\phi} B\to (A\oplus_{\phi} B)^*\cong A^* \oplus_{\infty} B^*$ be a derivation. Then $D$ is of the form
$$D(a,b)=(D_1(a)+ D_2(b), D_3(a)+D_4(b)) \qquad (a\in A, b\in B),$$
where $D_1:A\to A^*$, $D_2:B\to A^*$, $D_3:A\to B^*$ and $D_4:B\to B^*$ are linear operators. If we use the derivation property of $D$ together with the equations \eqref{eq7} and \eqref{eq8}, we get
$$\begin{array}{ll}
& \big( D_1(aa')+\phi(a)D_2(b')+\phi(a')D_2(b)+D_2(bb'), D_3(aa')+\phi(a)D_4(b')+\phi(a')D_4(b)+D_4(bb') \big) = \vspace{0.1cm} \\ &
 \big( aD_1(a')+aD_2(b')+ \al b, D_3(a')\ar \phi+ \al b, D_4(b')\ar \phi, \phi(a) D_3(a')+\phi(a)D_4(b')+bD_3(a')+ bD_4(b'))+ \vspace{0.1cm} \\ &
 (D_1(a)a'+D_2(b)a' + \al b', D_3(a)\ar \phi+ \al b', D_4(b)\ar \phi, \phi(a')D_3(a)+ \phi(a')D_4(b)]+ D_3(a)b'+D_4(b)b' \big).
\end{array}$$
By setting $b=b'=0$ we see that $D_1\in \Z^1(A,A^*)$ and $D_3\in \Z^1_{\phi}(A,A^*)$. Letting $a=a'=0$ one obtains $D_4\in \Z^1(B,B^*)$ and $D_2(bb')=\al b, D_4(b') \ar \phi+ \al b', D_4(b) \ar \phi$. \\
Now put $a=b'=0$. Then we get $b\cdot D_3(a')=0$ in $B^*$ which implies $D_3=0$ by density of $B^2$ in $B$. Hence $D_2(b)\cdot a'=\phi(a') D_2(b)$. Similarly,  choosing $a'=b=0$ gives $a\cdot D_2(b')=\phi(a) D_2(b')$.

Using \eqref{eq7} and \eqref{eq8} one can easily see that $D=\ad_{(f,g)}$ if and only if $D_1=\ad_f$, $D_2=0$ and $D_4=\ad_g$.
\end{proof}
Let $B$ be a Banach algebra. A derivation $D: B\to B^*$ is called cyclic if
$$\al b, D(b') \ar+ \al b', D(b) \ar=0 \quad \mbox{for all} \ b,b'\in B.$$
We denote by ${\Z}^1_c(B,B^*)$ the space of all cyclic derivations which includes ${\B}^1(B,B^*)$. The {\it first cyclic cohomology group} of $B$ is
${\H}^1_c(B,B^*)= {\Z}^1_c(B,B^*)/ {\B}^1(B,B^*)$.
\begin{theorem}\label{t8}
$\H^1(A,A^*)\oplus \H^1_c(B,B^*)$ embeds in $\H^1(A\oplus_{\phi} B, (A\oplus_{\phi} B)^*)$.
\end{theorem}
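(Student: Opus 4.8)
The plan is to read off an explicit splitting from the structure theorem for derivations just proved in Proposition \ref{p6}. Given a pair $(D_1,D_4)\in \Z^1(A,A^*)\oplus \Z^1_c(B,B^*)$, I would define a map
$$\Phi\colon \Z^1(A,A^*)\oplus \Z^1_c(B,B^*)\longrightarrow \Z^1(A\oplus_{\phi}B,(A\oplus_{\phi}B)^*),\qquad \Phi(D_1,D_4)=D,$$
where $D(a,b)=(D_1(a),\,D_4(b))$ for $a\in A$, $b\in B$. In the notation of Proposition \ref{p6} this is precisely the choice $D_2=0$ (and hence $D_3=0$), so the candidate $D$ has the admissible shape, and linearity of $\Phi$ is immediate.

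The first thing I would check is that $\Phi$ genuinely lands in $\Z^1(A\oplus_{\phi}B,(A\oplus_{\phi}B)^*)$, and this is where the cyclic condition enters — indeed this is the conceptual heart of the proof. With $D_2=0$, condition (iii)(1) of Proposition \ref{p6} is vacuous, while condition (iii)(2) becomes
$$0=D_2(bb')=\al b, D_4(b') \ar \phi+ \al b', D_4(b) \ar \phi \qquad (b,b'\in B),$$
which holds for all $b,b'$ exactly when $\al b, D_4(b') \ar+\al b', D_4(b) \ar=0$, i.e.\ exactly when $D_4$ is cyclic. Thus restricting the second coordinate to $\Z^1_c(B,B^*)$ is forced if we want $D_2=0$, and this is what makes the first cyclic cohomology group the natural invariant appearing in the statement. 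Together with $D_1\in \Z^1(A,A^*)$ and $D_4\in \Z^1(B,B^*)$, Proposition \ref{p6} then confirms $D\in \Z^1(A\oplus_{\phi}B,(A\oplus_{\phi}B)^*)$.

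It remains to pass to cohomology. Here I would invoke the ``Moreover'' clause of Proposition \ref{p6}: $D=\ad_{(f,g)}$ if and only if $D_1=\ad_f$, $D_2=0$ and $D_4=\ad_g$. Since in our construction $D_2=0$ holds identically, $\Phi(D_1,D_4)$ is inner precisely when $D_1$ is inner and $D_4$ is inner. I would first record the (routine) fact that every inner derivation of $B$ is cyclic, so that $\B^1(B,B^*)\subseteq \Z^1_c(B,B^*)$ and the quotient defining $\H^1_c(B,B^*)$ makes sense. Consequently $\Phi$ carries $\B^1(A,A^*)\oplus \B^1(B,B^*)$ into the inner derivations of $A\oplus_{\phi}B$, and the displayed characterization shows that the full preimage of the inner derivations under $\Phi$ is exactly $\B^1(A,A^*)\oplus \B^1(B,B^*)$. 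Therefore $\Phi$ descends to a well-defined linear map
$$\overline{\Phi}\colon \H^1(A,A^*)\oplus \H^1_c(B,B^*)\longrightarrow \H^1(A\oplus_{\phi}B,(A\oplus_{\phi}B)^*)$$
which is injective, since a class mapping to $0$ forces both $D_1$ and $D_4$ to be inner. This yields the claimed embedding. No step presents a serious analytic difficulty; the only point requiring care — and the one I would present most explicitly — is the identification of cyclicity of $D_4$ as the exact condition permitting $D_2=0$, and the verification that the preimage of the coboundaries is no larger than $\B^1\oplus \B^1$.
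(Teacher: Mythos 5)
Your proposal is correct and takes essentially the same route as the paper's proof: both define the map $(D_1,D_4)\mapsto D$, $D(a,b)=(D_1(a),D_4(b))$, and invoke Proposition~\ref{p6} both to see that $D$ is a derivation and to conclude from the ``Moreover'' clause that $D$ is inner exactly when $D_1$ and $D_4$ are, giving injectivity on cohomology. Your write-up is in fact somewhat more careful than the paper's (which is terse and contains a notational slip between $D_2$ and $D_4$), in particular in spelling out that cyclicity of $D_4$ is precisely the condition making the choice $D_2=0$ compatible with condition (iii)(2) of Proposition~\ref{p6}.
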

\begin{proof}
Define $\psi: \Z^1(A,A^*)\oplus \Z^1_c(B,B^*) \longrightarrow \Z^1(A\oplus_{\phi} B, (A\oplus_{\phi} B)^*)$ by $\psi(D_1, D_2)=D$, where
$$D(a,b)=(D_1(a), D_4(b)) \qquad (a\in A, b\in B).$$
It follows from Proposition \ref{p6} that $D$ is a derivation and it is inner if and only if $D_1$ and $D_2$ are inner. So $\psi$ induces an injective linear map from $\H^1(A,A^*)\oplus \H^1_c(B,B^*)$ into $\H^1(A\oplus_{\phi} B, (A\oplus_{\phi} B)^*)$.
\end{proof}
Corollary 5.6 of \cite{JW} shows that in general $\H^1(B,B^*)$ does not embeds into  $\H^1(A\oplus_{\phi} B, (A\oplus_{\phi} B)^*)$, and thus it seems that Theorem \ref{t8} be the best that one could expect.
\begin{cor}\label{c8} $($\cite[Theorem 2.11]{San}$)$
If $A\oplus_{\phi} B$ is weakly amenable, then $A$ is weakly amenable and $B$ is cyclicly amenable.
\end{cor}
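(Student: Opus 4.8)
The plan is to read the corollary off directly from the embedding established in Theorem \ref{t8}, so that no new computation is needed. By definition, the Banach algebra $A\oplus_{\phi} B$ is weakly amenable precisely when $\H^1(A\oplus_{\phi} B, (A\oplus_{\phi} B)^*)=0$. Theorem \ref{t8} furnishes an injective linear map
$$\H^1(A,A^*)\oplus \H^1_c(B,B^*)\longrightarrow \H^1(A\oplus_{\phi} B, (A\oplus_{\phi} B)^*),$$
so the first and only substantive step I would take is to invoke this map under the standing hypothesis that $A\oplus_{\phi} B$ is weakly amenable, which makes its codomain the zero space.

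Once the target is $\{0\}$, injectivity forces the source to vanish, i.e. $\H^1(A,A^*)\oplus \H^1_c(B,B^*)=0$; since this is a direct sum, each summand must be zero separately, giving $\H^1(A,A^*)=0$ and $\H^1_c(B,B^*)=0$. The first equality is exactly the statement that $A$ is weakly amenable, and, by the definition of the first cyclic cohomology group $\H^1_c(B,B^*)=\Z^1_c(B,B^*)/\B^1(B,B^*)$ introduced just before Theorem \ref{t8}, the second equality says that every continuous cyclic derivation from $B$ into $B^*$ is inner, i.e. $B$ is cyclically amenable. There is essentially no obstacle here: the entire content is carried by Theorem \ref{t8}, and the corollary is the purely formal observation that an injection into the zero space annihilates its domain summand by summand.
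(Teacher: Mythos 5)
Your proof is correct and follows exactly the route the paper intends: Corollary \ref{c8} is stated as an immediate consequence of the embedding in Theorem \ref{t8}, and your argument (injectivity into the zero space forces both summands $\H^1(A,A^*)$ and $\H^1_c(B,B^*)$ to vanish) is precisely that deduction. Nothing is missing.
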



\providecommand{\bysame}{\leavevmode\hbox
to3em{\hrulefill}\thinspace}

\end{document}